\documentclass[11pt]{article}
\usepackage{amsmath,amssymb,amsthm,mathrsfs}
\usepackage{geometry}
\usepackage{hyperref}
\usepackage{enumitem}
\newtheorem{theorem}{Theorem}
\newtheorem{proposition}{Proposition}
\newtheorem{lemma}{Lemma}
\newtheorem{definition}{Definition}

\title{Finite-Energy Weak Solutions to the Quantum Isothermal Euler System via a Logarithmic Schr\"odinger Approximation}

\author{Cheng Yu\\
Department of Mathematics, University of Florida, USA\\
\texttt{chengyu@ufl.edu}}
\date{}

\begin{document}

\maketitle
\begin{abstract}
This paper investigates the collisionless quantum hydrodynamic, or quantum Euler, system in \(\mathbb{T}^3\) with the linear pressure law \(P(\rho)=\rho\). Since this pressure is associated with the logarithmic internal energy \(f(\rho)=\rho\log\rho\), the model admits a natural logarithmic Schr\"odinger approximation. By means of a regularized logarithmic Schr\"odinger equation, we rigorously construct global weak solutions to the quantum isothermal Euler system. The proof relies on the Madelung transform, the polar decomposition of the wave functions, and compactness arguments. In particular, an energy  identity is used to recover the strong convergence of the hydrodynamic variables. More broadly, the analysis provides a robust Schr\"odinger approximation framework for QHD models whose internal energy contains an isothermal component.
\end{abstract}

\medskip
\noindent\textbf{Keywords:}
quantum hydrodynamics system,  logarithmic Schr\"odinger equation, finite-energy weak solutions, polar decomposition.
\newline

\noindent \textbf{2020 Mathematics Subject Classification.}
82D50, 35D30,35D35, 76Y05.

\medskip



\section{Introduction}

Quantum hydrodynamic (QHD) models provide a fluid way to understand quantum mechanical systems. These models arise naturally in a variety of contexts, including semiconductor theory, superfluidity, Bose-type fluids, and quantum chemistry \cite{AI,AM,ABC,Fey,Grant, HZ, J1,JM,Kad,Kha,Landau,Wy}. In their simplest form, these systems describe the evolution of the mass density \(\rho\) and current density \(J\), and combine a compressible Euler-type system with a dispersive term coming from the Bohm potential \cite{AM,Bo,DGPS,Grant}. A standard formal model reads
\begin{equation}\label{QHD equ}
\begin{cases}
\partial_t \rho + \operatorname{div}J = 0, \\[1mm]
\partial_t J + \operatorname{div}\!\left(\dfrac{J\otimes J}{\rho}\right) + \nabla P(\rho)
=
\dfrac{\hbar^2}{2}\rho\,\nabla\!\left(\dfrac{\Delta\sqrt{\rho}}{\sqrt{\rho}}\right).
\end{cases}
\end{equation}
where
$P(\rho)=\rho^{\gamma}$ 
 is the pressure law for $\gamma\geq 1$. The quantum term $
\dfrac{\hbar^2}{2}\rho\,\nabla\!\left(\dfrac{\Delta\sqrt{\rho}}{\sqrt{\rho}}\right)
$ can be described as a quantum correction to the pressure, or can be interpreted as the quantum Bohm potential. 
This term may also rewriten in the following stress form
\[
\frac{\hbar^2}{2}\rho\,\nabla\!\left(\frac{\Delta\sqrt{\rho}}{\sqrt{\rho}}\right)
=
\frac{\hbar^2}{4}\Delta\nabla\rho
-
\hbar^2\,\operatorname{div}\!\bigl(\nabla\sqrt{\rho}\otimes\nabla\sqrt{\rho}\bigr).
\]
For this term, we refer readers to more references
\cite{AntonelliMarcati2009,AntonelliMarcati2012,CarlesDanchinSaut2012,Jungel2010, JM, ST,VasseurYu2016QNS}.

\medskip


In this paper, we are interested in the collisionless quantum Euler system \eqref{QHD equ} in $\mathbb{T}^3$ with $$P(\rho)=\rho,$$
where $\mathbb{T}^3 = (\mathbb{R}/\mathbb{Z})^3$ is the three-dimensional flat torus.
Note  that the pressure is related to the internal energy density through
\[
P(\rho)=\rho f'(\rho)-f(\rho),
\]
 \(P(\rho)=\rho\) is exactly the one associated with the logarithmic internal energy \[f(\rho)=\rho\log\rho.\]

Therefore, the isothermal pressure law naturally singles out the logarithmic Schr\"odinger equation \cite{BialynickiBirulaMycielski1976,BialynickiBirulaMycielski1979,Carles2022Survey,Cazenave1983,CazenaveHaraux1980}. as the dispersive model associated with the quantum fluid under the Madelung transform \cite{Mad,CarlesDanchinSaut2012}. More precisely, the corresponding wave equation is
\begin{equation}
\label{SH equ}
i\hbar\,\partial_t\psi+\frac{\hbar^2}{2}\Delta\psi
=
(\log|\psi|^2+1)\psi,
\end{equation}
where $\psi=\psi(t,x)$ is the quantum wave function of the system, it  is a complex-valued field encoding the quantum state.
Under the Madelung transform \cite{Mad,CarlesDanchinSaut2012}
\[
\rho=|\psi|^2,
\qquad
J=\hbar\,\Im(\overline{\psi}\nabla\psi),\]
and  these formally solves the quantum Euler system \eqref{QHD equ} with pressure \(P(\rho)=\rho\).
In this sense, the logarithmic Schr\"odinger equation is the canonical wave-mechanical counterpart of the quantum isothermal Euler system \cite{Carles2022Survey,CarlesCarrapatosoHillairet2022}.
This serves as the physical motivation for our mathematical analysis in this paper.

\medskip

The logarithmic Schr\"odinger equation considered here is closely related to the one studied by Carles and Gallagher \cite{CarlesGallagher2018}. 
While the work in studies the defocusing logarithmic Schrödinger equation with initial data in the restrictive functional space
$\mathcal F(H^\alpha)\cap H^1$, $0<\alpha\le 1$,
our analysis assumes initial data in the energy space
\[
\psi_0\in H^1(\mathbb T^3),\qquad |\psi_0|^2\log |\psi_0|^2\in L^1(\mathbb T^3).
\]
Furthermore, our goals diverge. We utilize the logarithmic Schrödinger equation as an intermediate step to construct finite-energy weak solutions for the quantum hydrodynamic (QHD) system. In contrast, Carles and Gallagher \cite{CarlesGallagher2018}  focus on the Schrödinger equation itself, analyzing its Cauchy problem and large-time asymptotic behavior.


\medskip

Our goal is to construct finite-energy weak solutions to \eqref{QHD equ} with linear pressure by exploiting this Schr\"odinger structure. More precisely, starting from initial data generated by a wave function $\psi_0 \in H^1(\mathbb{T}^3)$ satisfying
\[
|\psi_0|^2 \log |\psi_0|^2 \in L^1(\mathbb{T}^3),
\]
we seek to construct a finite-energy weak solution to the corresponding quantum hydrodynamic system, and to justify the passage from the dispersive description to the hydrodynamic one.

\medskip

The main difficulty is that the logarithmic nonlinearity is singular near vacuum. Indeed, the term $\log |\psi|^2$ diverges as $|\psi|\to 0$, which makes a direct rigorous treatment of \eqref{SH equ} delicate. At the hydrodynamic level, the difficulty is compounded by the singular nature of the Bohm correction. In particular, for the collisionless quantum Euler system, the basic energy inequality alone is not sufficient to provide the compactness needed to pass to the limit in the nonlinear terms. More precisely, it does not directly yield the uniform control of $\nabla\sqrt{\rho}$ required in the Bohm term and in the quadratic structure of the system.

\medskip

This lack of compactness cannot be handled by the standard tools available in viscous models. In the quantum Navier--Stokes setting, stronger a priori estimates are often obtained through the Bresch--Desjardins entropy \cite{BD,BreschDesjardins2006}, which provides additional control beyond the basic energy inequality and, in particular, yields improved bounds on $\nabla\sqrt{\rho}$. 
Based on this mechanism, weak existence theories for quantum Navier--Stokes systems were developed by
J\"ungel \cite{J1,Jungel2010}, Jungel-Milisic \cite{JM}, ~Germain-LeFloch \cite{GermainLeFloch2016},
~Lacroix-Violet-Vasseur \cite{
LacroixVioletVasseur2018},
Vasseur-Yu \cite{VasseurYu2016QNS}  and more recently, Carles, Carrapatoso, and Hillairet \cite{CarlesCarrapatosoHillairet2022}.
\medskip

However, in the absence of viscosity, the BD entropy is no longer available. On the other hand, earlier Schr\"odinger-based constructions of finite-energy weak solutions to QHD systems with pressure laws of the form \(P(\rho)=\rho^\gamma\), \(1<\gamma\leq 3\),  which relied on Strichartz estimates for $\nabla \psi$ to recover compactness, see \cite{AntonelliMarcati2009, AntonelliMarcati2012}. Such an approach is not applicable in the logarithmic Schr\"odinger framework considered in this paper. This makes the isothermal case with logarithmic internal energy both natural and technically nontrivial.

\medskip

To overcome these difficulties, we introduce a regularized logarithmic Schr\"odinger approximation
\begin{equation}\label{eq:intro-reg}
i\hbar\,\partial_t\psi^\delta+\frac{\hbar^2}{2}\Delta\psi^\delta
=
\bigl(\log(|\psi^\delta|^2+\delta)+1\bigr)\psi^\delta,
\qquad \delta>0.
\end{equation}
The corresponding hydrodynamic observables are defined by
\[
\rho_\delta:=|\psi_\delta|^2,
\qquad
J_\delta:=\hbar\,\Im(\overline{\psi_\delta}\nabla\psi_\delta).
\]
This regularization removes the singularity at vacuum while preserving the logarithmic energy structure associated with the pressure law $P(\rho)=\rho$. In this sense, the approximation is not merely a technical device for compactness, but is naturally adapted to the quantum isothermal Euler system \cite{BaoCarlesSuTang2019}.

\medskip

The novelty of this paper lies in the construction of weak solutions to the collisionless quantum isothermal Euler system with \(P(\rho)=\rho\) through a regularized logarithmic Schr\"odinger approximation using the energy method, without relying on viscosity or on Strichartz estimates for compactness.

\medskip


 \subsection*{Main result}
 
Before stating the main theorem, we briefly recall the notions of weak solution and finite-energy weak solution.

\begin{definition}[Weak solution]
\label{def:intro-weak}
Let \(T>0\). We say that \((\rho,J)\) is a weak solution of \eqref{QHD equ} on \([0,T)\times\mathbb{T}^3\) with initial data \((\rho_0,J_0)\) if there exist functions \(\sqrt{\rho}\) and \(\Lambda\) such that
\[
\sqrt{\rho}\in L^2([0,T);H^1(\mathbb{T}^3)),
\qquad
\Lambda\in L^2([0,T);L^2(\mathbb{T}^3)),
\]
and, defining
\[
\rho:=(\sqrt{\rho})^2,
\qquad
J:=\sqrt{\rho}\,\Lambda,
\]
the following hold:
\begin{itemize}
\item for every \(\zeta \in C_c^\infty([0,T)\times\mathbb{T}^3)\),
\[
\int_0^T\!\!\int_{\mathbb{T}^3}
\bigl(\rho\,\partial_t\zeta + J\cdot\nabla\zeta\bigr)\,dx\,dt
+
\int_{\mathbb{R}^3}\rho_0(x)\zeta(0,x)\,dx
=0;
\]
\item for every \(\varphi\in C_c^\infty([0,T)\times\mathbb{T}^3;\mathbb{T}^3)\),
\begin{align*}
\int_0^T\!\!\int_{\mathbb{T}^3}
\Bigl(
J\cdot\partial_t\varphi
+\Lambda\otimes\Lambda:\nabla\varphi
+\rho\,\operatorname{div}\varphi
+\hbar^2\nabla\sqrt{\rho}\otimes\nabla\sqrt{\rho}:\nabla\varphi
-\frac{\hbar^2}{4}\rho\,\Delta(\operatorname{div}\varphi)
\Bigr)\,dx\,dt
\\
+\int_{\mathbb{T}^3}J_0(x)\cdot\varphi(0,x)\,dx
=0.
\end{align*}
\end{itemize}
\end{definition}

\begin{definition}[Finite-energy weak solution]
\label{def:intro-fews}
A weak solution \((\rho,J)\) in the sense of Definition~\ref{def:intro-weak} is called a finite-energy weak solution if, for a.e. \(t\in[0,T)\), 
the following  energy 
\[
E(t)
=
\int_{\mathbb{T}^3}
\left(
\frac{\hbar^2}{2}|\nabla\sqrt{\rho}(t,x)|^2
+
\frac12 |\Lambda(t,x)|^2
+
\rho(t,x)\log \rho(t,x)
\right)\,dx
\]
is finite.
\end{definition}

We assume that the initial data have finite global energy, namely
\[
E(\psi_0)
:=
\int_{\mathbb{T}^3}
\left(
\frac{\hbar^2}{2}|\nabla\psi_0|^2
+
|\psi_0|^2\log |\psi_0|^2
\right)\,dx
<+\infty.
\]
By the Madelung transform and the quadratic identity
\[
\hbar^2|\nabla\psi_0|^2
=
\hbar^2|\nabla\sqrt{\rho_0}|^2
+
|\Lambda_0|^2,
\]
this is equivalent to
\[
E(0)
:=
\int_{\mathbb{T}^3}
\left(
\frac{\hbar^2}{2}|\nabla\sqrt{\rho_0}|^2
+
\frac12 |\Lambda_0|^2
+
\rho_0\log \rho_0
\right)\,dx
<+\infty.
\]

Our main result is stated as follows.
\begin{theorem}\label{thm:intro-main}
Let \(\psi_0\in H^1(\mathbb{T}^3)\),  and \(
|\psi_0|^2\log|\psi_0|^2\in L^1(\mathbb{T}^3),
\)
and define
\[
\rho_0:=|\psi_0|^2,
\qquad
J_0:=\hbar\,\Im(\overline{\psi_0}\nabla\psi_0).
\]
Then, for every \(T>0\), there exists a finite-energy weak solution \((\rho,J)\) of the collisionless quantum Euler system \eqref{QHD equ} on \([0,T)\times\mathbb{T}^3\), with $P(\rho)=\rho$,  in the sense of Definitions~\ref{def:intro-weak} and \ref{def:intro-fews}, with initial data \((\rho_0,J_0)\).  Moreover, the weak solutions satisfy the energy equality
\[
E(t)=E(0), \qquad\text{ for any }  t>0.
\]

\end{theorem}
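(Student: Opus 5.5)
The plan is to construct the solution as a limit $\delta\to0$ of solutions to the regularized equation \eqref{eq:intro-reg}. For fixed $\delta>0$ the nonlinearity $F_\delta(\psi)=(\log(|\psi|^2+\delta)+1)\psi$ is smooth; its derivative is bounded by $C(1+|\log\delta|+\log(1+|\psi|^2))$, so it is locally Lipschitz on $H^1$ up to a logarithmic loss. On $\mathbb{T}^3$ I would obtain a global $H^1$ solution $\psi_\delta\in C([0,T];H^1)$ in two steps. First solve a further truncated problem, for instance with Galerkin projections or with $\log(|\psi|^2+\delta)$ cut off at large $|\psi|$, by a fixed point argument in $C([0,T];L^2)$. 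Then pass to the limit using the conserved quantities below, since they give uniform $H^1$ bounds. Conservation of mass $\|\psi_\delta(t)\|_{L^2}=\|\psi_0\|_{L^2}$ follows from gauge invariance. Conservation of the regularized energy
\[
E_\delta(t)=\int_{\mathbb{T}^3}\Bigl(\frac{\hbar^2}{2}|\nabla\psi_\delta|^2+G_\delta(|\psi_\delta|^2)\Bigr)dx,\qquad G_\delta(s)=(s+\delta)\log(s+\delta)-\delta\log\delta,
\]
is immediate because $G_\delta'(s)=\log(s+\delta)+1$. The negative part of $G_\delta$ is controlled by the mass uniformly in $\delta\le1$, and on the torus it stays bounded. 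Hence $\|\nabla\psi_\delta\|_{L^\infty_tL^2}\le C$ uniformly in $\delta$. We also have $E_\delta(0)\to E(\psi_0)$, by dominated convergence using $|\psi_0|^2\log|\psi_0|^2\in L^1$.

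\textbf{Compactness.} From the equation, $\partial_t\psi_\delta$ is bounded in $L^\infty_tH^{-1}$. The nonlinear term is bounded in $L^2$, because $|\psi\log(|\psi|^2+\delta)|\le C(1+|\psi|^{1+\epsilon})$ and $|\psi|^{1/2}$-type losses are harmless when $|\psi|$ is small. Aubin--Lions then gives $\psi_\delta\to\psi$ strongly in $C([0,T];L^2)$ and, after interpolation, in $L^p$ for $p<6$. We also get $\nabla\psi_\delta\rightharpoonup^*\nabla\psi$ weakly in $L^\infty_tL^2$. Passing to the limit in the nonlinearity gives that $\psi$ solves \eqref{SH equ} in the distributional sense; near vacuum $|\psi|\log|\psi|^2\to0$, so the term converges pointwise with a uniform integrable bound. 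Since the limit equation has the same conserved energy, I would obtain the energy inequality $E(\psi(t))\le E(\psi_0)$ by lower semicontinuity. Running the argument backward from time $t$, or using uniqueness of $H^1$ solutions of logNLS (Cazenave--Haraux, whose uniqueness uses the inequality $|\Im((z\log|z|^2-w\log|w|^2)(\bar z-\bar w))|\le2|z-w|^2$), gives equality. Once $E(\psi(t))=E(\psi_0)=\lim E_\delta(t)$, and the potential parts converge strongly, we get $\|\nabla\psi_\delta(t)\|_{L^2}\to\|\nabla\psi(t)\|_{L^2}$. So $\nabla\psi_\delta\to\nabla\psi$ strongly in $L^2_{t,x}$. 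I expect this step, upgrading weak to strong convergence of $\nabla\psi_\delta$ through the energy identity, to be the main obstacle, since it replaces Strichartz estimates. It needs care with the potential term near $\rho=0$ and with the limit $\delta\to0$ at every fixed $t$.

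\textbf{Hydrodynamic quantities via polar factorization.} For $\psi\in H^1$ choose a polar factor $\phi$ with $|\phi|\le1$ and $\psi=\sqrt\rho\,\phi$. Set $\sqrt\rho=|\psi|$ and $\Lambda=\hbar\Im(\bar\phi\nabla\psi)$. Then $\nabla\sqrt\rho=\Re(\bar\phi\nabla\psi)$, and we have the identity $\hbar^2|\nabla\psi|^2=\hbar^2|\nabla\sqrt\rho|^2+|\Lambda|^2$, which gives the finite energy and the equality $E(t)=E(0)$. In the weak formulation, I would derive the hydrodynamic equations for $(\rho_\delta,J_\delta)$ directly from the smooth regularized equation. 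The extra pressure is $P_\delta(\rho)=\rho G_\delta'(\rho)-G_\delta(\rho)=\rho-\delta\log(\rho+\delta)+\delta\log\delta$, which tends to $\rho$ in $L^1$. Using the strong $H^1$ convergence, $\nabla\sqrt{\rho_\delta}\otimes\nabla\sqrt{\rho_\delta}+\Lambda_\delta\otimes\Lambda_\delta=\hbar^2\Re(\nabla\bar\psi_\delta\otimes\nabla\psi_\delta)$ converges in $L^1$. Then $\Lambda_\delta\otimes\Lambda_\delta$ also converges, because $\nabla\sqrt{\rho_\delta}\to\nabla\sqrt\rho$ in $L^2$ (following Antonelli--Marcati's stability lemma for the polar factorization, with $\Lambda_\delta\to\Lambda$ strongly in $L^2$). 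The terms $J_\delta$, $\rho_\delta$ and the linear Bohm term converge trivially. This yields Definitions~\ref{def:intro-weak} and \ref{def:intro-fews}.
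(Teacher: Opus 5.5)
Your proposal follows essentially the same route as the paper: the $\log(|\psi|^2+\delta)$ regularization, uniform $H^1$ bounds and Aubin--Lions compactness to reach a logNLS weak solution, and the Cazenave--Haraux time-reversal argument to turn the energy inequality into an equality. Note that this argument needs the backward run from $\overline{\psi(t)}$ and uniqueness together, not as alternatives. The equality then gives $\nabla\psi_\delta\to\nabla\psi$ strongly in $L^2_{t,x}$, and the Antonelli--Marcati polar factorization lemma transfers this to $(\nabla\sqrt{\rho_\delta},\Lambda_\delta)$ before the limit in the hydrodynamic weak form. The differences are technical and mostly in your favor: compactness in $C([0,T];L^2)$ supplies the fixed-time convergence the energy argument actually uses, and you derive the $\delta$-level hydrodynamic equations directly instead of through the paper's space-time mollification with commutator remainders. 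For $H^1$ data that direct derivation needs an explicit density step, since $\partial_t J_\delta$ cannot be computed for $H^1$ solutions; one option is $H^2$ data at fixed $\delta$ plus $H^1$ continuous dependence.
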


 As an intermediate step, we show that the regularized logarithmic Schr\"odinger approximation converges, as $\delta \to 0$, to a weak solution of the limiting logarithmic Schr\"odinger equation in the natural energy class (Theorem \ref{weak solution to log SE} in Section \ref{weak solution}). In the present paper,
 this result is not an end in itself, but an intermediate step toward the hydrodynamic limit.
This provides the limiting wave function. For this wave function, we later establish an energy identity. Such an identity  supplies the strong compactness needed when passing to the quantum hydrodynamic formulation.
\medskip

A key ingredient in our analysis is the polar decomposition of wave functions, following the framework developed in the QHD literature \cite{AntonelliMarcati2009, AntonelliMarcati2012}. Writing formally
\[
\psi=\sqrt{\rho}\,e^{iS},
\]
one is led to the identity
\[
\hbar^2|\nabla\psi|^2=\hbar^2|\nabla\sqrt{\rho}|^2+|\Lambda|^2,
\Lambda=\hbar\sqrt{\rho}\,\nabla S,
\]
which reveals the quadratic structure of the energy. At the level of weak solutions, the pair $(\nabla\sqrt{\rho},\Lambda)$ has better stability properties than the velocity field itself, and this structure is crucial for the compactness argument. In particular, once strong convergence of $\nabla\psi_\delta$ is obtained, the corresponding strong convergence of $\nabla\sqrt{\rho_\delta}$ and $\Lambda_\delta$ follows through the polar decomposition.

\medskip

 Our  proof is based on the following main ingredients. First, for each fixed \(\delta>0\), the regularized logarithmic Schr\"odinger equation is globally well posed in \(H^1(\mathbb{T}^3)\), with conservation of mass and  energy \cite{Cazenave2003,GuerreroLopezNieto2010}. Second, the resulting family of approximate solutions is sufficiently compact to recover, in the limit $\delta \to 0$, a unique weak solution of the logarithmic Schr\"odinger equation (Theorem \ref{weak solution to log SE} in Section \ref{weak solution}). We then use the  argument in \cite{Haraux1981}
to establish an energy identity for the limiting weak solution (Proposition \ref{proposition-energy equality}
 in Section \ref{weak solution}). This identity, combined with the convergence of the  nonlinear energy, yields the strong convergence of the approximate gradients,
\[
\nabla \psi^\delta \to \nabla \psi
\qquad \text{in } L^2.
\]
Therefore, by the polar decomposition and the quadratic structure, the strong convergence of the hydrodynamic variables
\[
\nabla \sqrt{\rho^\delta} \to \nabla \sqrt{\rho},
\qquad
\Lambda^\delta \to \Lambda
\qquad \text{in } L^2.
\]
This provides the key compactness mechanism for passing from the dispersive approximation to the hydrodynamic weak formulation.

\medskip

The paper is organized as follows. Section 2 recalls basic facts on the polar decomposition and the underlying quadratic structure, which are central to our analysis. In Section 3, we introduce a regularized logarithmic Schrödinger approximation and establish the main properties of its solutions. Section 4 is devoted to weak solutions of the logarithmic Schrödinger equation; in particular, we prove an energy equality for such solutions. This energy equality is then used in Section 5 to upgrade weak convergence to strong convergence $\nabla\psi^{\delta}$ in $L^2$, a key compactness ingredient. Finally, in the last section, we use the compactness framework developed in the paper to pass to the limit in the regularized logarithmic Schrödinger approximation and recover a weak solution to the QHD system.
\medskip

\section{Polar decomposition and quadratic structure}\label{sec:polar}

A key ingredient in our analysis is the use of the polar decomposition of the wave function together with the associated quadratic quantities. Thus, we briefly recall the relevant facts from \cite{AntonelliMarcati2009,AntonelliMarcati2012}. The basic idea of polar decomposition  is reminiscent of the approach introduced by Brenier in \cite{Brenier1991}. 
The quadratic feature can improve compactness and stability properties at the level of weak solutions, which is crucial in our analysis.

Let $$   \psi \in H^1(\mathbb{T}^3; \mathbb{C})   $$ be the quantum wave function.
  We define the density
\[
\rho:=|\psi|^2,
\]
is the associated probability density (which plays the role of fluid density in the hydrodynamic formulation).
Whenever \(\psi(x)\neq0\), one may write
\[
\psi(x)=\sqrt{\rho(x)}\,\phi(x),
\qquad |\phi(x)|=1.
\]
This is the polar decomposition of \(\psi\), where \(\phi\) plays the role of the phase factor. Meanwhile,  one defines the current
\[
J:=\hbar\,\Im(\overline{\psi}\nabla\psi),
\]
and the momentum variable
\[
\Lambda:=\hbar\,\Im(\overline{\phi}\nabla\psi),
\]
where \(\phi\) is any measurable choice such that
\[
\psi=\sqrt{\rho}\,\phi,
\qquad |\phi|=1 \quad \text{a.e. on } \{\rho>0\}.
\]
This is the standard polar decomposition underlying the Madelung transform \cite{Mad}.
Then \(\Lambda\) is independent of the choice of \(\phi\) on the vacuum region, and one has the identity
\[
J=\sqrt{\rho}\,\Lambda.
\]

The advantage of this formulation is that the pair \((\nabla\sqrt{\rho},\Lambda)\) enjoys better stability properties than the velocity field itself. Indeed, if one formally writes
\[
\psi=\sqrt{\rho}\,e^{iS},
\]
then
\[
\nabla\psi
=
e^{iS}\nabla\sqrt{\rho}
+
i e^{iS}\sqrt{\rho}\,\nabla S,
\]
and therefore
\[
|\nabla\psi|^2
=
|\nabla\sqrt{\rho}|^2+\rho|\nabla S|^2.
\]
Since
\[
\Lambda=\hbar\sqrt{\rho}\,\nabla S,
\]
the previous identity becomes
\begin{equation}\label{eq:quadratic-identity}
\hbar^2|\nabla\psi|^2
=
\hbar^2|\nabla\sqrt{\rho}|^2+|\Lambda|^2.
\end{equation}
This is the fundamental quadratic structure underlying the energy functional. In particular, the kinetic energy may be written as
\[
\frac{\hbar^2}{2}\int_{\mathbb{T}^3}|\nabla\psi|^2\,dx
=
\frac{\hbar^2}{2}\int_{\mathbb{T}^3}|\nabla\sqrt{\rho}|^2\,dx
+
\frac12\int_{\mathbb{T}^3}|\Lambda|^2\,dx.
\]

A rigorous version of \eqref{eq:quadratic-identity} is available at the \(H^1\)-level. More precisely, if \(\psi\in H^1(\mathbb{T}^3)\), then there exists \(\phi\in L^\infty(\mathbb{T}^3)\) with
\[
\psi=\sqrt{\rho}\,\phi,
\qquad |\phi|\le 1 \quad \text{a.e.},
\]
such that
\[
\nabla\sqrt{\rho}=\Re(\overline{\phi}\nabla\psi),
\qquad
\Lambda=\hbar\,\Im(\overline{\phi}\nabla\psi),
\]
and
\begin{equation}\label{eq:rigorous-quadratic-identity}
\hbar^2\,\Re(\partial_j\overline{\psi}\,\partial_k\psi)
=
\hbar^2\,\partial_j\sqrt{\rho}\,\partial_k\sqrt{\rho}
+
\Lambda^{(j)}\Lambda^{(k)}
\end{equation}
for every \(j,k\in\{1,2,3\}\). In particular,
\[
\hbar^2|\nabla\psi|^2=\hbar^2|\nabla\sqrt{\rho}|^2+|\Lambda|^2
\qquad\text{a.e.}
\]
This quadratic identity was proved in \cite{AntonelliMarcati2009}. 
It  is especially useful for the weak stability of solutions. Indeed, strong convergence of \(\nabla\psi^\delta\) in \(L^2\) implies strong convergence of both \(\nabla\sqrt{\rho^\delta}\) and \(\Lambda^\delta\) in \(L^2\), since
\[
\nabla\sqrt{\rho^\delta}
=
\Re(\overline{\phi^\delta}\nabla\psi^\delta),
\qquad
\Lambda^\delta
=
\hbar\,\Im(\overline{\phi^\delta}\nabla\psi^\delta),
\]
with \(|\phi^\delta|\le1\). This provides a possible approach to constructing weak solutions to QHD induced from the Schrödinger formulation.

\section{The regularized logarithmic Schr\"odinger approximation}

In this section, we introduce a regularized logarithmic Schr\"odinger approximation adapted to the quantum isothermal Euler system \eqref{QHD equ}. The regularization preserves the logarithmic energy structure associated with the pressure law \(P(\rho)=\rho\), while removing the singularity of the nonlinearity at vacuum.

\subsection{Regularized internal energy and pressure}
To avoid singularities at $\rho=0$, we regularize $f(\rho)=\rho\log\rho$ by
\begin{equation}\label{eq:fdelta}
f_\delta(\rho):=(\rho+\delta)\log(\rho+\delta),\qquad \delta>0.
\end{equation}
Then
\[
f_\delta'(\rho)=\log(\rho+\delta)+1,
\qquad
P_\delta(\rho):=\rho f_\delta'(\rho)-f_\delta(\rho)=\rho-\delta\log(\rho+\delta).
\]
In particular, $P_\delta(\rho)\to \rho$ pointwise as $\delta\to0$.  Thus, we can expect this limit passage  in some $L^p$ space later when we need to vanish $\delta$.

\subsection{The regularized logarithmic Schr\"odinger approximation}
\label{subsec:reg-logNLS}

To approximate the quantum isothermal Euler system, we consider the regularized logarithmic Schr\"odinger equation
\begin{equation}\label{eq:NLSdelta}
i\hbar\,\partial_t\psi^\delta + \frac{\hbar^2}{2}\Delta\psi^\delta
=
f_\delta'(|\psi^\delta|^2)\psi^\delta
=
\bigl(\log(|\psi^\delta|^2+\delta)+1\bigr)\psi^\delta,
\qquad
\psi^\delta(0)=\psi_0\in H^1(\mathbb{T}^3),
\end{equation}
where
\begin{equation}\label{eq:fdelta-def}
f_\delta(\rho):=(\rho+\delta)\log(\rho+\delta),
\qquad
f_\delta'(\rho)=\log(\rho+\delta)+1.
\end{equation}

The hydrodynamic observables associated with \(\psi^\delta\) are defined by
\begin{equation}\label{eq:rhoJ}
\rho^\delta:=|\psi^\delta|^2,
\qquad
J^\delta:=\hbar\,\Im\!\bigl(\psi^\delta\nabla\overline{\psi^\delta}\bigr).
\end{equation}
As we shall see below, the pair \((\rho^\delta,J^\delta)\) formally satisfies a regularized quantum hydrodynamic system with pressure law determined by \(f_\delta\).

\subsection{Formal derivation of the regularized quantum Euler system}
\label{subsec:formal-QE}

In this subsection we explain formally how \eqref{eq:NLSdelta} induces a regularized quantum Euler system. For simplicity of notation, we write
\[
\psi=\psi^\delta,
\qquad
\rho=\rho^\delta,
\qquad
J=J^\delta.
\]

\paragraph{Step 1: Continuity equation.}
We compute
\[
\partial_t\rho
=
\partial_t(\psi\overline{\psi})
=
\overline{\psi}\,\partial_t\psi+\psi\,\partial_t\overline{\psi}
=
2\Re\bigl(\overline{\psi}\,\partial_t\psi\bigr).
\]
From \eqref{eq:NLSdelta},
\[
\partial_t\psi
=
\frac{i\hbar}{2}\Delta\psi-\frac{i}{\hbar}f_\delta'(|\psi|^2)\psi.
\]
Hence
\[
\partial_t\rho
=
2\Re\left(
\overline{\psi}\left(\frac{i\hbar}{2}\Delta\psi-\frac{i}{\hbar}f_\delta'(\rho)\psi\right)
\right)
=
\hbar\,\Re(i\,\overline{\psi}\,\Delta\psi),
\]
since \(f_\delta'(\rho)\rho\) is real-valued. Using \(\Re(i z)=-\Im(z)\), we obtain
\[
\partial_t\rho=-\hbar\,\Im(\overline{\psi}\,\Delta\psi).
\]
On the other hand,
\[
\operatorname{div}\Im(\overline{\psi}\nabla\psi)
=
\Im(\nabla\overline{\psi}\cdot\nabla\psi)+\Im(\overline{\psi}\Delta\psi)
=
\Im(\overline{\psi}\Delta\psi),
\]
because \(\nabla\overline{\psi}\cdot\nabla\psi=|\nabla\psi|^2\in\mathbb{R}\). Therefore,
\[
\partial_t\rho
=
-\hbar\,\operatorname{div}\Im(\overline{\psi}\nabla\psi)
=
-\operatorname{div}J,
\]
that is,
\begin{equation}\label{eq:cont-formal}
\partial_t\rho+\operatorname{div}J=0.
\end{equation}

\paragraph{Step 2: Momentum equation in stress form.}
Differentiating \(J=\hbar\,\Im(\psi\nabla\overline{\psi})\) with respect to time and using \eqref{eq:NLSdelta}, one obtains the stress-form identity
\begin{equation}\label{eq:stress-form}
\partial_t J
=
-\hbar^2\,\operatorname{div}\Re(\nabla\psi\otimes\nabla\overline{\psi})
+\frac{\hbar^2}{4}\Delta\nabla\rho
-\rho\,\nabla\bigl(f_\delta'(\rho)\bigr).
\end{equation}

\paragraph{Step 3: Identification of the pressure.}
The pressure associated with \(f_\delta\) is defined by
\begin{equation}\label{eq:Pdelta}
P_\delta(\rho):=\rho f_\delta'(\rho)-f_\delta(\rho).
\end{equation}
Since
\[
\nabla P_\delta(\rho)=\rho\,\nabla f_\delta'(\rho),
\]
the last term in \eqref{eq:stress-form} becomes \(-\nabla P_\delta(\rho)\), and thus
\begin{equation}\label{eq:stress-form-P}
\partial_t J+\nabla P_\delta(\rho)
=
-\hbar^2\,\operatorname{div}\Re(\nabla\psi\otimes\nabla\overline{\psi})
+\frac{\hbar^2}{4}\Delta\nabla\rho.
\end{equation}
A direct computation gives
\[
P_\delta(\rho)
=
\rho\bigl(\log(\rho+\delta)+1\bigr)-(\rho+\delta)\log(\rho+\delta)
=
\rho-\delta\log(\rho+\delta).
\]

\paragraph{Step 4: Polar decomposition and quadratic flux.}
By the polar decomposition lemma of \cite{AntonelliMarcati2009}, there exists a polar factor \(\phi\), with \(|\phi|\le1\), such that
\[
\psi=\sqrt{\rho}\,\phi
\qquad\text{a.e.},
\]
and, defining
\[
\Lambda:=\hbar\,\Im(\overline{\phi}\nabla\psi),
\qquad
J=\sqrt{\rho}\,\Lambda,
\]
one has
\begin{equation}\label{eq:polar-stress}
\hbar^2\,\Re(\nabla\psi\otimes\nabla\overline{\psi})
=
\hbar^2\,\nabla\sqrt{\rho}\otimes\nabla\sqrt{\rho}
+\Lambda\otimes\Lambda.
\end{equation}
Substituting \eqref{eq:polar-stress} into \eqref{eq:stress-form-P}, we obtain
\begin{equation}\label{eq:stress-form-Lambda}
\partial_t J+\nabla P_\delta(\rho)
=
-\operatorname{div}(\Lambda\otimes\Lambda)
-\hbar^2\operatorname{div}\bigl(\nabla\sqrt{\rho}\otimes\nabla\sqrt{\rho}\bigr)
+\frac{\hbar^2}{4}\Delta\nabla\rho.
\end{equation}

\paragraph{Step 5: Bohm identity.}
Using the identity
\[
\frac{\hbar^2}{2}\rho\,\nabla\!\left(\frac{\Delta\sqrt{\rho}}{\sqrt{\rho}}\right)
=
\frac{\hbar^2}{4}\Delta\nabla\rho
-\hbar^2\operatorname{div}\bigl(\nabla\sqrt{\rho}\otimes\nabla\sqrt{\rho}\bigr),
\]
we may rewrite \eqref{eq:stress-form-Lambda} as
\begin{equation}\label{eq:qhd-momentum-final}
\partial_t J+\operatorname{div}(\Lambda\otimes\Lambda)+\nabla P_\delta(\rho)
=
\frac{\hbar^2}{2}\rho\,\nabla\!\left(\frac{\Delta\sqrt{\rho}}{\sqrt{\rho}}\right).
\end{equation}

Therefore, at least formally, the observables \((\rho^\delta,J^\delta)\) induced by \eqref{eq:NLSdelta} solve the regularized collisionless quantum hydrodynamic system
\begin{equation}\label{eq:QHD-final}
\begin{cases}
\partial_t\rho^\delta+\operatorname{div}J^\delta=0,\\[1mm]
\partial_t J^\delta+\operatorname{div}(\Lambda^\delta\otimes\Lambda^\delta)+\nabla P_\delta(\rho^\delta)
=
\dfrac{\hbar^2}{2}\rho^\delta\,\nabla\!\left(\dfrac{\Delta\sqrt{\rho^\delta}}{\sqrt{\rho^\delta}}\right),
\end{cases}
\end{equation}
where \(\Lambda^\delta\) is defined through polar decomposition and
\[
P_\delta(\rho)=\rho-\delta\log(\rho+\delta).
\]

\subsection{Well-posedness for the regularized equation}

We first recall a standard well-posedness result for the regularized logarithmic Schr\"odinger equation. Since $\delta>0$ is fixed, the singularity of the logarithm at vacuum is removed, and the equation becomes a standard semilinear Schr\"odinger equation in $H^1(\mathbb{T}^3)$. We include this result only for completeness, since it provides the starting point of the approximation scheme used below.

\begin{proposition}\label{prop:global-H1-reglogNLS}
Let $\delta>0$ and $\psi_0\in H^1(\mathbb{T}^3)$. Then, for every $T>0$, the Cauchy problem
 \begin{equation}\label{eq:reglogNLS-wp}
i\hbar\,\partial_t\psi^\delta+\frac{\hbar^2}{2}\Delta\psi^\delta
=
F_\delta(\psi^\delta),
\qquad
\psi^\delta(0)=\psi_0,
\end{equation}
where
\[
F_\delta(z):=\bigl(\log(|z|^2+\delta)+1\bigr)z,
\qquad z\in\mathbb{C}.
\]
admits  a unique solution
\[
\psi^\delta\in C([0,T];H^1(\mathbb{T}^3))
\cap C^1([0,T];H^{-1}(\mathbb{T}^3)).
\]
Moreover, the mass and the regularized energy are conserved on $[0,T]$.
\end{proposition}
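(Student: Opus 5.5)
The plan is to treat \eqref{eq:reglogNLS-wp} as a semilinear Schr\"odinger equation whose nonlinearity is locally Lipschitz on \(H^1(\mathbb{T}^3)\) only up to a logarithmic loss. We then obtain global existence from the conservation laws and a Brezis--Gallouet type (or compactness plus uniqueness) argument. Concretely, for fixed \(\delta>0\) we first record the pointwise properties of \(F_\delta\):
\[
|F_\delta(z)|\lesssim_\delta |z|\bigl(1+\log(1+|z|^2)\bigr),\qquad |\partial_z F_\delta(z)|+|\partial_{\bar z}F_\delta(z)|\lesssim_\delta 1+\log(1+|z|^2).
\]
Here we use that \(|z|^2/(|z|^2+\delta)\le 1\). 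Thus \(F_\delta\) grows slower than any power \(|z|^{1+\epsilon}\), which is far below the \(H^1\)-critical growth on \(\mathbb{T}^3\).

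\textbf{Existence.} We use a Galerkin or smoothing scheme, following the compactness method of \cite{Cazenave2003}. We regularize \(F_\delta\) by a cutoff \(F_{\delta,n}\) that is globally Lipschitz. Each approximate problem is then solvable in \(C([0,T];H^1)\) by a Duhamel fixed point, since \(e^{it\hbar\Delta/2}\) is unitary on \(H^1\). We build the cutoff so that it preserves the gauge structure, \(F_{\delta,n}(z)=g_n(|z|^2)z\) with \(g_n\) real. Then mass \(\|\psi\|_{L^2}^2\) and an energy \(\frac{\hbar^2}{2}\|\nabla\psi\|_{L^2}^2+\int G_n(|\psi|^2)\) are conserved, where \(G_n'=g_n\). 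Since \(G_n(\rho)\ge -C_\delta(1+\rho)\), mass and energy give a uniform \(H^1\) bound. The equation also gives \(\partial_t\psi_n\) bounded in \(H^{-1}\). Aubin--Lions then yields \(\psi_n\to\psi\) strongly in \(C([0,T];L^2)\) and weak-\(*\) in \(L^\infty H^1\), and \(F_{\delta,n}(\psi_n)\to F_\delta(\psi)\) in \(L^1\), so the limit solves the equation in \(H^{-1}\).

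\textbf{Uniqueness, the main obstacle.} \(F_\delta\) is not globally Lipschitz on \(L^2\). We plan to take two \(H^1\) solutions and estimate \(\frac{d}{dt}\|\psi_1-\psi_2\|_{L^2}^2=\frac{2}{\hbar}\Im\int(F_\delta(\psi_1)-F_\delta(\psi_2))\overline{(\psi_1-\psi_2)}\). The first option is the Cazenave--Haraux inequality for the logarithmic term, \(|\Im((z_1\log|z_1|^2-z_2\log|z_2|^2)(\bar z_1-\bar z_2))|\le 2|z_1-z_2|^2\), adapted to \(\log(|z|^2+\delta)\). Here the imaginary part kills the symmetric part, and the regularization only improves the bound, giving \(|\Im(\cdots)|\le C|z_1-z_2|^2\) uniformly in \(\delta\). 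Gronwall then yields uniqueness directly. If that adaptation fails, the fallback is to bound the difference by \((1+\log(1+|\psi_1|^2+|\psi_2|^2))|w|^2\) and use H\"older with \(H^1\subset L^6\) together with an Osgood/Yudovich-type lemma to close the estimate.

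\textbf{Regularity and conservation.} With uniqueness in hand, weak continuity plus conservation of mass and energy upgrade the solution to \(C([0,T];H^1)\). The equation then gives \(C^1([0,T];H^{-1})\). For conservation itself, uniqueness lets us identify the solution with the limit of the approximations. Energy conservation in the limit follows by the standard argument: the energy is lower semicontinuous, which gives \(E(t)\le E(0)\), and time reversibility with uniqueness gives the reverse inequality.
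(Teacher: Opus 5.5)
Your argument is correct, but it follows a different route from the paper. The paper handles the proposition in a few lines. It observes that $F_\delta$ is smooth with at most cubic growth, hence locally Lipschitz from $H^1(\mathbb{T}^3)$ to $H^{-1}(\mathbb{T}^3)$. It then cites the classical $H^1$ theory for local well-posedness and the conservation laws, and globalizes using $f_\delta(\rho)\ge -C_\delta(\rho+1)$ together with mass conservation.

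You instead proceed in three steps, following the scheme of Cazenave's general-domain theory:
\begin{itemize}
\item you build the solution by compactness (cutoff or Galerkin approximations, Aubin--Lions, Vitali);
\item you prove uniqueness in the larger class $L^\infty(0,T;H^1)\cap W^{1,\infty}(0,T;H^{-1})$ from the gauge structure of $F_\delta$;
\item only then do you recover $C([0,T];H^1)\cap C^1([0,T];H^{-1})$ and energy conservation, via lower semicontinuity plus time reversal and uniqueness.
\end{itemize}

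Your primary uniqueness route works, so the fallback is unnecessary. Write $g(\rho)=\log(\rho+\delta)+1$. Then $\Im\bigl[(F_\delta(z_1)-F_\delta(z_2))(\overline{z_1}-\overline{z_2})\bigr]=\bigl(g(|z_2|^2)-g(|z_1|^2)\bigr)\Im(z_1\overline{z_2})$. Assume $|z_1|\le|z_2|$ and set $a=\sqrt{|z_1|^2+\delta}\le b=\sqrt{|z_2|^2+\delta}$. The bounds $\log(b^2/a^2)\le 2(b-a)/a\le 2|z_1-z_2|/a$ and $|\Im(z_1\overline{z_2})|\le |z_1|\,|z_1-z_2|\le a|z_1-z_2|$ give the constant $2$, uniformly in $\delta$.

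One small point concerns the cutoff step. The contraction must be run in $C([0,T];L^2)$, because a Nemytskii operator is not Lipschitz on $H^1$; the $H^1$ bound then comes from a closed-ball argument. Alternatively, a Fourier--Galerkin truncation avoids this entirely, since it conserves mass and energy exactly.

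The paper's route buys brevity, at the price of a black box. On $\mathbb{T}^3$ the Strichartz-based $H^1$ theory from $\mathbb{R}^N$ does not transfer directly. Local Lipschitz continuity $H^1\to H^{-1}$ alone also does not close a fixed point in $C([0,T];H^1)$. Making the paper's sketch rigorous therefore needs either Strichartz estimates on the torus or exactly the compactness-plus-uniqueness framework you use. Your route is self-contained and purely energy-based. Its $\delta$-uniform $L^2$-stability estimate is the same mechanism the paper later uses, via Lemma~\ref{control of complex}, for uniqueness of the limiting logarithmic equation.
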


\begin{proof}
The proof is standard. For fixed $\delta>0$, the nonlinearity
\[
F_\delta(z)=\bigl(\log(|z|^2+\delta)+1\bigr)z
\]
is smooth and has at most cubic growth. Therefore $F_\delta$ defines a locally Lipschitz map from $H^1(\mathbb{T}^3)$ to $H^{-1}(\mathbb{T}^3)$, and the local well-posedness in
\[
C([0,T_*);H^1(\mathbb{T}^3))\cap C^1([0,T_*);H^{-1}(\mathbb{T}^3))
\]
follows from the classical $H^1$ theory for semilinear Schr\"odinger equations; see, e.g., \cite{Cazenave2003}. The conservation of mass and regularized energy is obtained by the usual density argument. Finally, the regularized energy controls the $H^1$ norm up to the conserved $L^2$ mass, since
\[
f_\delta(\rho)=(\rho+\delta)\log(\rho+\delta)\ge -C_\delta(\rho+1),
\qquad \rho\ge0,
\]
and thus the solution extends globally in time.
\end{proof}

\section{Weak solutions to the logarithmic Schr\"odinger equation} \label{weak solution}

In this section, we analyze the limiting logarithmic Schr\"odinger equation obtained from the regularized approximation. We emphasize that this part is not intended as an independent Cauchy theory for the logarithmic Schr\"odinger equation. Rather, its role is to provide the compactness and energy tools needed later in the proof of the main theorem. 

\medskip

More precisely, starting from the globally well-posed regularized problem, we recover a weak solution of the limiting logarithmic Schr\"odinger equation in the natural energy space
\[
W(\mathbb T^3):=\left\{u\in H^1(\mathbb T^3): |u|^2\log |u|^2\in L^1(\mathbb T^3)\right\},
\]
and then establish an energy equality, which is the key ingredient for proving the strong  convergence of the approximate gradients and the associated hydrodynamic variables.

\medskip

\subsection{Global existence of weak solutions to logarithmic Schr\"odinger equation}

The first compactness step is to obtain strong convergence of \(\psi^\delta\). This follows from the uniform \(H^1\)-bound, the  \(L^2\)-bound on the nonlinearity, and the Aubin--Lions lemma.

\begin{proposition}[Compactness of \(\psi^\delta\)]\label{prop:compactness}
Assume that \(\{\psi^\delta\}_{\delta>0}\) satisfies the uniform bound
\begin{equation}\label{uniform-H1}
\sup_{\delta>0}\|\psi^\delta\|_{L^\infty(0,T;H^1(\mathbb{T}^3))}\le C_T.
\end{equation}
and 
\begin{equation}\label{assum eq:uniform-F}
\sup_{\delta>0}\|F_\delta(\psi^\delta)\|_{L^\infty(0,T;L^2(\mathbb{T}^3))}\le C_{T}.
\end{equation}
Then, up to extraction of a subsequence, there exists
\[
\psi\in L^\infty(0,T;H^1(\mathbb{T}^3))
\]
such that, 
\begin{align}
\psi^\delta &\rightharpoonup^\ast \psi
\qquad \text{in }L^\infty(0,T;H^1(\mathbb{T}^3)), \label{eq:weak-star-H1}\\
\psi^\delta &\to \psi
\qquad \text{strongly in }L^2(0,T;L^2(\mathbb{T}^3)), \label{eq:strong-L2-local}
\end{align}
and, after extraction of a further subsequence if necessary,
\begin{equation}\label{eq:ae-conv}
\psi^\delta(t,x)\to\psi(t,x)
\qquad\text{for a.e. }(t,x)\in(0,T)\times \mathbb{T}^3.
\end{equation}
\end{proposition}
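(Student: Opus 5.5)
The plan is a standard Aubin--Lions argument: combine the uniform bounds \eqref{uniform-H1} and \eqref{assum eq:uniform-F} with the equation \eqref{eq:reglogNLS-wp} to control $\partial_t\psi^\delta$ uniformly. The three steps are weak-$\ast$ extraction, a time-derivative bound, and the compactness lemma followed by a Riesz--Fischer subsequence argument.

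\textbf{Weak-$\ast$ limit.} Since $L^\infty(0,T;H^1(\mathbb{T}^3))$ is the dual of the separable space $L^1(0,T;H^{-1}(\mathbb{T}^3))$, the bound \eqref{uniform-H1} and Banach--Alaoglu give a subsequence and a limit $\psi\in L^\infty(0,T;H^1(\mathbb{T}^3))$ with \eqref{eq:weak-star-H1}.

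\textbf{Bound on $\partial_t\psi^\delta$.} From \eqref{eq:reglogNLS-wp}, understood in $C([0,T];H^{-1})$ by Proposition~\ref{prop:global-H1-reglogNLS}, we have
\[
\partial_t\psi^\delta=\frac{i\hbar}{2}\Delta\psi^\delta-\frac{i}{\hbar}F_\delta(\psi^\delta).
\]
The term $\Delta\psi^\delta$ is bounded in $L^\infty(0,T;H^{-1})$ by \eqref{uniform-H1}. The term $F_\delta(\psi^\delta)$ is bounded in $L^\infty(0,T;L^2)\subset L^\infty(0,T;H^{-1})$ by \eqref{assum eq:uniform-F}. Hence $\partial_t\psi^\delta$ is uniformly bounded in $L^\infty(0,T;H^{-1}(\mathbb{T}^3))$.

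\textbf{Compactness and strong convergence.} We apply the Aubin--Lions--Simon lemma to the triple $H^1(\mathbb{T}^3)\Subset L^2(\mathbb{T}^3)\hookrightarrow H^{-1}(\mathbb{T}^3)$. The compactness of the first embedding is Rellich's theorem, which holds on the compact manifold $\mathbb{T}^3$. The family $\{\psi^\delta\}$ is bounded in $L^\infty(0,T;H^1)$ with time derivatives bounded in $L^\infty(0,T;H^{-1})$, so it is relatively compact in $C([0,T];L^2)$, and in particular in $L^2(0,T;L^2)$. Passing to a further subsequence, the strong limit must coincide with the weak-$\ast$ limit $\psi$: both convergences imply distributional convergence, and distributional limits are unique. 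This gives \eqref{eq:strong-L2-local}.

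\textbf{Almost-everywhere convergence.} Strong convergence in $L^2((0,T)\times\mathbb{T}^3)$ gives \eqref{eq:ae-conv} along a further subsequence, by the partial converse of dominated convergence (Riesz--Fischer).

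\textbf{Main obstacle.} The one step needing care is the time-derivative bound, because the nonlinear term must be uniformly controlled in $\delta$ near vacuum. Here that difficulty is absorbed by the hypothesis \eqref{assum eq:uniform-F}. If one wanted to verify that hypothesis rather than assume it, the natural route is the pointwise estimate
\[
|F_\delta(z)|\le C\bigl(|z|^{1-\epsilon}+|z|^{1+\epsilon}\bigr),
\]
valid uniformly for $\delta\le 1$, combined with Sobolev embedding $H^1\subset L^6$. Beyond this, all remaining steps are routine diagonal extractions.
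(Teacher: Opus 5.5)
Your proposal is correct and follows the paper's argument step for step: weak-$\ast$ extraction from the $L^\infty(0,T;H^1)$ bound, a uniform $L^\infty(0,T;H^{-1})$ bound on $\partial_t\psi^\delta$ read off from the equation, the Aubin--Lions lemma for $H^1(\mathbb{T}^3)\Subset L^2(\mathbb{T}^3)\hookrightarrow H^{-1}(\mathbb{T}^3)$, and a further subsequence for a.e.\ convergence. Your version is slightly more careful than the paper's: Simon's form of the lemma gives compactness even in $C([0,T];L^2)$, and you explicitly identify the strong limit with the weak-$\ast$ limit.
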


\begin{proof}
The uniform bound \eqref{uniform-H1} implies that \(\{\psi^\delta\}\) is bounded in
\(L^\infty(0,T;H^1(\mathbb{T}^3))\). Hence, up to extraction of a subsequence, there exists
\(\psi\in L^\infty(0,T;H^1(\mathbb{T}^3))\) such that \eqref{eq:weak-star-H1} holds.

Next we write 
\[
i\hbar\,\partial_t\psi^\delta
=
-\frac{\hbar^2}{2}\Delta\psi^\delta + F_\delta(\psi^\delta).
\]
Since \(\psi^\delta\) is bounded in
\(L^\infty(0,T;H^1(\mathbb{T}^3))\), and \(\Delta\psi^\delta\) is bounded in
\(L^\infty(0,T;H^{-1}(\mathbb{T}^3))\). Moreover, by \eqref{assum eq:uniform-F},
\(F_\delta(\psi^\delta)\) is bounded in \(L^\infty(0,T;L^2(\mathbb{T}^3))\), hence also in
\(L^\infty(0,T;H^{-1}(\mathbb{T}^3))\). Therefore,
\[
\sup_{\delta>0}\|\partial_t\psi^\delta\|_{L^\infty(0,T;H^{-1}(\mathbb{T}^3))}\le C_{T}.
\]

Since the embedding
\[
H^1(\mathbb{T}^3)\hookrightarrow L^2(\mathbb{T}^3)
\]
is compact and
\[
L^2(\mathbb{T}^3)\hookrightarrow H^{-1}(\mathbb{T}^3)
\]
is continuous, the Aubin--Lions lemma yields \eqref{eq:strong-L2-local}. Passing to a further subsequence, we also obtain the almost everywhere convergence \eqref{eq:ae-conv}.
\end{proof}

With such compactness in Proposition~\ref{prop:compactness},  we are ready to recover a weak solution to  logarithmic Schr\"odinger equation by letting $\delta\to 0.$

\begin{theorem}
\label{weak solution to log SE}
Assume that the hypotheses of Proposition \ref{prop:compactness} hold, and let \[\psi_0\in H^1(\mathbb{T}^3)\;\;\;\text{ and }\;\; 
|\psi_0|^2\log|\psi_0|^2\in L^1(\mathbb{T}^3). 
\]
Then  \(\psi\) is a unique weak solution of
\begin{equation}\label{eq:logNLS-limit}
i\hbar\,\partial_t\psi+\frac{\hbar^2}{2}\Delta\psi
=
(\log|\psi|^2+1)\psi,\quad\quad\text{ and }\;\; \psi(0,x)=\psi_0,
\end{equation}
in the sense that
\begin{equation}\label{eq:weak-form-limit}
\int_0^T\!\!\int_{\mathbb{T}^3}
\left(
-i\hbar\,\psi\,\partial_t\overline{\varphi}
-\frac{\hbar^2}{2}\nabla\psi\cdot\nabla\overline{\varphi}
-(\log|\psi|^2+1)\psi\,\overline{\varphi}
\right)\,dx\,dt
=
i\hbar\int_{\mathbb{T}^3}\psi_0(x)\overline{\varphi(0,x)}\,dx
\end{equation}
for every \(\varphi\in C_c^\infty([0,T)\times\mathbb{T}^3)\).
\end{theorem}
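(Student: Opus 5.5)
Our plan is to pass to the limit $\delta\to0$ in the weak formulation satisfied by $\psi^\delta$, using the convergences \eqref{eq:weak-star-H1}, \eqref{eq:strong-L2-local} and \eqref{eq:ae-conv} from Proposition~\ref{prop:compactness}. Uniqueness will then follow from the classical Cazenave--Haraux argument, which exploits the logarithmic inequality.

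\textbf{Step 1: the weak formulation for fixed $\delta$.} Since $\psi^\delta\in C([0,T];H^1)\cap C^1([0,T];H^{-1})$ by Proposition~\ref{prop:global-H1-reglogNLS}, we test \eqref{eq:NLSdelta} against $\overline{\varphi}$ for $\varphi\in C_c^\infty([0,T)\times\mathbb{T}^3)$. Integrating by parts in $t$ and $x$ gives \eqref{eq:weak-form-limit}, with $\psi$ replaced by $\psi^\delta$ and $(\log|\psi|^2+1)\psi$ replaced by $F_\delta(\psi^\delta)$. The initial term is $\psi^\delta(0)=\psi_0$, which is independent of $\delta$.

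\textbf{Step 2: the linear terms.} The terms $\psi^\delta\partial_t\overline\varphi$ and $\nabla\psi^\delta\cdot\nabla\overline\varphi$ converge by the weak-$*$ convergence in $L^\infty(0,T;H^1)$, since $\partial_t\varphi$ and $\nabla\varphi$ lie in $L^1(0,T;L^2)$.

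\textbf{Step 3: the nonlinear term.} This is the main step. For $z\neq0$ we have $F_\delta(z)\to(\log|z|^2+1)z$ as $\delta\to0$, and for $z=0$ both sides vanish. The function $G(z):=(\log|z|^2+1)z$, extended by $G(0)=0$, is continuous on $\mathbb{C}$, and $F_\delta\to G$ locally uniformly. Indeed,
\[
|F_\delta(z)-G(z)|=|z|\log\!\Bigl(1+\frac{\delta}{|z|^2}\Bigr)\le \min\{\,|z|\log(1+\delta/|z|^2),\ \delta/|z|\,\}\lesssim\sqrt{\delta}.
\]
To check the last bound, split into the cases $|z|\le\sqrt\delta$ (using $s\log(1+\delta/s^2)\le C\sqrt\delta$) and $|z|\ge\sqrt\delta$. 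Combined with \eqref{eq:ae-conv}, this gives $F_\delta(\psi^\delta)\to G(\psi)$ almost everywhere. Assumption \eqref{assum eq:uniform-F} gives a uniform $L^2$ bound, hence equi-integrability, so Vitali's theorem yields $F_\delta(\psi^\delta)\to G(\psi)$ in $L^1((0,T)\times\mathbb{T}^3)$, and also weakly in $L^2$. This suffices against the smooth $\varphi$.

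An alternative that avoids the assumption is the growth bound $|G(z)|\lesssim |z|^{1-\epsilon}+|z|^{1+\epsilon}$. With $\psi^\delta$ bounded in $L^\infty(0,T;L^6)$, this gives equi-integrability directly. The limit $\psi$ therefore satisfies \eqref{eq:weak-form-limit}.

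\textbf{Step 4: uniqueness and the energy class.} Lower semicontinuity together with Fatou's lemma, applied to the conserved regularized energy, shows that $\psi\in L^\infty(0,T;W)$. For uniqueness, let $\psi_1,\psi_2$ be two weak solutions with the same data. The difference satisfies a linear equation in $L^\infty(0,T;H^1)$ with $\partial_t(\psi_1-\psi_2)\in L^\infty(0,T;H^{-1})$, so the $H^1$–$H^{-1}$ pairing is justified and
\[
\frac{\hbar}{2}\frac{d}{dt}\|\psi_1-\psi_2\|_{L^2}^2=\Im\int\bigl(G(\psi_1)-G(\psi_2)\bigr)\overline{(\psi_1-\psi_2)}\,dx .
\]
We then invoke the Cazenave--Haraux inequality
\[
\bigl|\Im\bigl((\log|z_1|^2 z_1-\log|z_2|^2z_2)(\overline{z_1}-\overline{z_2})\bigr)\bigr|\le 2|z_1-z_2|^2,
\]
noting that the $+1$ part drops out because $z\mapsto z$ contributes only a real term. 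Gronwall's inequality then gives $\psi_1=\psi_2$. The same uniqueness shows that the whole family $\psi^\delta$, not just a subsequence, converges to $\psi$.

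The main delicate point is Step 3, namely handling the nonlinearity near vacuum where $\log$ is singular. It is resolved by the uniform convergence $F_\delta\to G$ and equi-integrability.
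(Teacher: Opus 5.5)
Your proposal is correct and follows essentially the same route as the paper. Both pass to the limit in the $\delta$-weak formulation using the compactness of Proposition~\ref{prop:compactness}, identify the nonlinear limit through a.e.\ convergence plus the uniform $L^2$ bound, and get uniqueness from the Cazenave--Haraux inequality and Gronwall; your explicit bound $\sup_{z}|F_\delta(z)-(\log|z|^2+1)z|\lesssim\sqrt{\delta}$ treats the vacuum set $\{\psi=0\}$ more carefully than the paper, which asserts the a.e.\ convergence of $F_\delta(\psi^\delta)$ there without comment.
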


\begin{proof}
By Proposition~\ref{prop:global-H1-reglogNLS}, for every \(\delta>0\) there exists a unique global solution to \eqref{eq:reglogNLS-wp} satisfying the conserved energy identity
\[
E_\delta(\psi(t))
:=
\int_{\mathbb{T}^3}
\left(
\frac{\hbar^2}{2}|\nabla\psi^\delta(t,x)|^2
+
f_\delta(|\psi^\delta(t,x)|^2)
\right)\,dx
=
E_\delta(\psi(0))<+\infty,
\]
and conserved the mass

\begin{equation}\label{mass-conservation}
\|\psi^\delta(t)\|_{L^2(\mathbb{T}^3)}
=
\|\psi_0\|_{L^2(\mathbb{T}^3)}.
\end{equation}

Note that $$ f_\delta(|\psi^\delta(t,x)|^2)=(|\psi^{\delta}|^2+\delta)\log (|\psi^\delta|^2+\delta),$$
thus we are able to find a large enough number $C>0$, such that 
\[\int_{\mathbb{T}^3}
\frac{\hbar^2}{2}|\nabla\psi^\delta(t,x)|^2
\,dx+\int_{\mathbb{T}^3}(|\psi^{\delta}|^2+\delta)\log^+ (|\psi^\delta|^2+\delta)\,dx
\leq E_{\delta}(0)+\int_{\mathbb{T}^3}(|\psi^{\delta}|^2+\delta)\log^- (|\psi^\delta|^2+\delta)\,dx.
\]

Now we need to control $\int_{\mathbb{T}^3}(|\psi^{\delta}|^2+\delta)\log^- (|\psi^\delta|^2+\delta)\,dx$.
A key observation is the classical estimate
\[
\rho \log \rho \ge -\frac{1}{e} \quad \text{for all } \rho \ge 0.
\]

This give us that, 
 \[\int_{\mathbb{T}^3}
\frac{\hbar^2}{2}|\nabla\psi^\delta(t,x)|^2
\,dx+\int_{\mathbb{T}^3}(|\psi^{\delta}|^2+\delta)\log^+ (|\psi^\delta|^2+\delta)\,dx
\leq E_{\delta}(0)+\frac{C}{e},
\]where $C$ is a constant depending on $\mathbb{T}^3$.

With help of \eqref{mass-conservation}, this implies that \begin{equation}
\label{H1 bound}
\sup_{\delta>0} \|\psi^\delta\|_{L^\infty(0,T; H^1(\mathbb{T}^3))} \le C_T,
\end{equation}
\smallskip

Since 
\[
F_\delta(\psi^\delta) := (\log(|\psi^\delta|^2 + \delta) + 1) \, \psi^\delta,
\]
we can estimate, for any $\theta > 0$,
\[
|F_\delta(\psi^\delta)| \le C |\psi^\delta| \big( (|\psi^\delta|^2 + \delta)^\theta + (|\psi^\delta|^2 + \delta)^{1-\theta} \big).
\]
Choosing $\theta = \frac{1}{2}$, we obtain
\[
|F_\delta(\psi^\delta)| \le C \big( |\psi^\delta|^2 + 1 \big).
\]

Then, using the uniform $H^1$ bound \eqref{H1 bound}, and  the Sobolev embedding $H^1(\mathbb{T}^3) \hookrightarrow L^6(\mathbb{T}^3)$, we conclude that

\begin{equation}\label{eq:uniform-F}
\sup_{\delta>0}\|F_\delta(\psi^\delta)\|_{L^\infty(0,T;L^2(\mathbb{T}^3))}\le C_{T}.
\end{equation}

Thus,   up to extraction of a subsequence, there exists a function 
\[
G\in L^2((0,T)\times \mathbb{T}^3)
\]
such that
\begin{equation}
\label{weak limit of F}
F_\delta(\psi^\delta)\rightharpoonup G
\qquad\text{weakly in }L^2((0,T)\times \mathbb{T}^3).
\end{equation}

On the other hand, the strong convergence of \(\psi^\delta\) in \(L^2((0,T)\times \mathbb{T}^3)\) implies, up to a subsequence,
\[
\psi^\delta(t,x)\to \psi(t,x)
\qquad\text{for a.e. }(t,x)\in(0,T)\times \mathbb{T}^3.
\]
Therefore,
\[
|\psi^\delta(t,x)|^2+\delta \to |\psi(t,x)|^2
\qquad\text{for a.e. }(t,x)\in(0,T)\times \mathbb{T}^3,
\]
and hence
\[
\bigl(\log(|\psi^\delta|^2+\delta)+1\bigr)\psi^\delta
\to
(\log|\psi|^2+1)\psi
\qquad\text{for a.e. }(t,x)\in(0,T)\times \mathbb{T}^3.
\]
By uniqueness of the weak limit, it follows that
\begin{equation}
\label{G formu}
G=(\log|\psi|^2+1)\psi
\qquad\text{a.e. on }(0,T)\times \mathbb{T}^3.
\end{equation}

For each \(\delta>0\), the weak formulation of \eqref{eq:reglogNLS-wp} is 
\begin{equation}\label{eq:weak-form-delta}
\int_0^T\!\!\int_{\mathbb{R}^3}
\left(
-i\hbar\,\psi^\delta\,\partial_t\overline{\varphi}
-\frac{\hbar^2}{2}\nabla\psi^\delta\cdot\nabla\overline{\varphi}
-F_\delta(\psi^\delta)\,\overline{\varphi}
\right)\,dx\,dt
=
i\hbar\int_{\mathbb{T}^3}\psi_0(x)\overline{\varphi(0,x)}\,dx
\end{equation}
for every \(\varphi\in C_c^\infty([0,T)\times\mathbb{T}^3)\).

By \eqref{H1 bound} and \eqref{eq:uniform-F},  Proposition \ref{prop:compactness} gives us the compactness of this family of solutions. In particular, 
by \eqref{eq:strong-L2-local}, we have
\[
\psi^\delta\to\psi
\qquad\text{strongly in }L^2((0,T)\times \operatorname{supp}\varphi),
\]
hence
\[
\int_0^T\!\!\int_{\mathbb{T}^3}
\psi^\delta\,\partial_t\overline{\varphi}\,dx\,dt
\to
\int_0^T\!\!\int_{\mathbb{T}^3}
\psi\,\partial_t\overline{\varphi}\,dx\,dt.
\]
Also, by \eqref{eq:weak-star-H1},
\[
\nabla\psi^\delta \rightharpoonup \nabla\psi
\qquad\text{weakly in }L^2((0,T)\times \operatorname{supp}\varphi),
\]
so
\[
\int_0^T\!\!\int_{\mathbb{T}^3}
\nabla\psi^\delta\cdot\nabla\overline{\varphi}\,dx\,dt
\to
\int_0^T\!\!\int_{\mathbb{T}^3}
\nabla\psi\cdot\nabla\overline{\varphi}\,dx\,dt.
\]
Finally, by \eqref{weak limit of F} and \eqref{G formu}, 
\[
\int_0^T\!\!\int_{\mathbb{T}^3}
F_\delta(\psi^\delta)\,\overline{\varphi}\,dx\,dt
\to
\int_0^T\!\!\int_{\mathbb{T}^3}
G\,\overline{\varphi}\,dx\,dt.
\]

Since both systems share the same initial data $\psi^{\delta}_0=\psi_0$,   we can pass to the limit in \eqref{eq:weak-form-delta} and obtain \eqref{eq:weak-form-limit}.

Using a classical argument as in \cite{CarlesGallagher2018, Haraux1981}], the uniqueness of weak solutions follows from the following lemma.

\begin{lemma}[Page 119 from~\cite{Haraux1981}]\label{control of complex}
We have
\begin{equation}
\operatorname{Im}\Bigl( z_2 \ln |z_2|^2 - z_1 \ln |z_1|^2 \Bigr)\,(\overline{z_2}-\overline{z_1})
\leq C |z_2-z_1|^2,
\qquad \forall z_1,z_2 \in \mathbb{C}.
\end{equation}
\end{lemma}

Assume that $\psi^1$ and $\psi^2$ are two solutions of \eqref{eq:logNLS-limit} constructed in the proof, with the same initial data. Set
\[
\psi:=\psi^1-\psi^2 \quad\text{ and }\;\psi_0(x)=0,
\]
Then $\psi$ satisfies
\begin{equation}
i\partial_t\psi+\Delta\psi
=
\bigl(\ln|\psi^1|^2\,\psi^1-\ln|\psi^2|^2\,\psi^2\bigr)+\psi.
\end{equation}
Using the regularity of $\psi^1$ and $\psi^2$, we may perform an $L^2$ energy estimate. We obtain
\begin{align}
\frac12\frac{d}{dt}\|\psi(t)\|_{L^2(\mathbb{R}^d)}^2
&=
\operatorname{Im}\int_{\mathbb{T}^3}
\bigl(\ln|\psi^1|^2\,\psi^1-\ln|\psi^2|^2\,\psi^2\bigr)\overline{\psi}(t,x)\,dx
+\operatorname{Im}\int_{\mathbb{T}^3}\psi\,\overline{\psi}\,dx \notag\\
&=
\operatorname{Im}\int_{\mathbb{T}^3}
\bigl(\ln|\psi^1|^2\,\psi^1-\ln|\psi^2|^2\,\psi^2\bigr)
(\overline{\psi^1}-\overline{\psi^2})(t,x)\,dx \notag\\
&\leq C\,\|\psi(t)\|_{L^2(\mathbb{T}^3)}^2,
\end{align}
thanks to Lemma~\ref{control of complex}, where $C>0$ is the constant given there. By Gronwall's lemma and the fact that $\psi(0)=0$, it follows that $\psi\equiv 0$. Hence the solution is unique.

\end{proof}

The next step is to show the weak solution conserves the energy, which is based on the following lemma.

\begin{lemma}
\label{lemma:local-nonlinear-energy-convergence}
Assume
\[
\psi^{\delta}\xrightarrow[\delta\to 0]{}
\psi
\quad\text{strongly in }L^{2}\!\bigl((0,T)\times \mathbb{T}^3\bigr),
\]
and that there exists \(C_{T}>0\) such that
\[
\sup_{\delta>0}\|\psi^{\delta}\|_{L^{\infty}\!\bigl(0,T;H^{1}(\mathbb{T}^3)\bigr)}\le C_{T}.
\]
Define for \(\rho\ge 0\)
\[
f_{\delta}(\rho):=(\rho+\delta)\log(\rho+\delta),\qquad
f(\rho):=\rho\log\rho .
\]
Then, for almost every \(t\in(0,T)\),
\[
\lim_{\delta\to0}\int_{\mathbb{T}^3}
f_{\delta}\!\bigl(|\psi^{\delta}(t,\cdot)|^{2}\bigr)\,dx
=
\int_{\mathbb{T}^3}
f\!\bigl(|\psi(t,\cdot)|^{2}\bigr)\,dx .
\]
\end{lemma}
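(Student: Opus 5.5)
We will prove the claim by combining pointwise a.e. convergence for a.e. fixed time with uniform integrability coming from the $H^1$ bound, and then applying Vitali's convergence theorem on the finite-measure space $\mathbb{T}^3$.

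\emph{Step 1: convergence at a.e. fixed time.} Since $\psi^\delta\to\psi$ in $L^2((0,T)\times\mathbb{T}^3)=L^2(0,T;L^2(\mathbb{T}^3))$, we have $\|\psi^\delta(t)-\psi(t)\|_{L^2(\mathbb{T}^3)}\to0$ in $L^2(0,T)$. Along a subsequence (not relabeled), this gives $\psi^\delta(t)\to\psi(t)$ in $L^2(\mathbb{T}^3)$ for a.e. $t\in(0,T)$. Fix such a $t$, and in addition require that $\|\psi^\delta(t)\|_{H^1}\le C_T$ for all $\delta$ in the sequence. This is legitimate because a countable sequence excludes only a null set of times. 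For such $t$, any further subsequence has a sub-subsequence along which $\psi^\delta(t,x)\to\psi(t,x)$ for a.e. $x$. Since $f_\delta(r_\delta)=(r_\delta+\delta)\log(r_\delta+\delta)$ and $s\mapsto s\log s$ is continuous on $[0,\infty)$ (with value $0$ at $s=0$), we get $f_\delta(|\psi^\delta|^2)\to f(|\psi|^2)$ a.e. in $x$. This handles the vacuum set $\{\psi=0\}$ without difficulty, precisely because we work with $s\log s$ rather than $\log s$.

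\emph{Step 2: uniform integrability.} For $0\le s\le 2$ we have $|s\log s|\le C$. For $s\ge1$ we have $|s\log s|\le C_\varepsilon s^{1+\varepsilon}$; taking $\varepsilon=1/2$ gives $|s\log s|\le C(1+s^{3/2})$ for all $s\ge0$. With $s=|\psi^\delta|^2+\delta$ and $\delta\le1$, this yields
\[
|f_\delta(|\psi^\delta(t)|^2)|\le C\bigl(1+|\psi^\delta(t)|^3\bigr).
\]
By the Sobolev embedding $H^1(\mathbb{T}^3)\hookrightarrow L^6(\mathbb{T}^3)$, $|\psi^\delta(t)|^3$ is bounded in $L^2(\mathbb{T}^3)$ uniformly in $\delta$. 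Hence $\{f_\delta(|\psi^\delta(t)|^2)\}_\delta$ is bounded in $L^2(\mathbb{T}^3)$, and is therefore uniformly integrable on $\mathbb{T}^3$, which has finite measure. Vitali's theorem then gives convergence of the integrals along the sub-subsequence. Because the limit $\int f(|\psi(t)|^2)\,dx$ does not depend on the subsequence chosen, the full sequence converges for that $t$.

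\emph{Main obstacle.} The main subtlety is Step 1, specifically passing from space-time $L^2$ convergence to convergence at a fixed time. As stated, the lemma's "for a.e. $t$" must be read along a subsequence, or else one should upgrade the convergence to $C([0,T];L^2)$. If every time is needed, I would instead use the bound $\partial_t\psi^\delta\in L^\infty(0,T;H^{-1})$ from the proof of Proposition~\ref{prop:compactness}. Together with the $L^\infty H^1$ bound, Aubin--Lions--Simon gives $\psi^\delta\to\psi$ in $C([0,T];L^2(\mathbb{T}^3))$, and the argument above then applies at every $t\in[0,T]$. The growth estimate in Step 2 is routine, since the cubic bound is comfortably below the $L^6$ threshold.
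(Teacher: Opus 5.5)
Your proof is correct, but it is organized differently from the paper's, and the differences matter.

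\emph{The paper's route.} It works in space-time. From a.e.\ convergence on $(0,T)\times\mathbb{T}^3$ and the bound $(\rho+\delta)\log(\rho+\delta)\le C(1+\rho+\rho^3)$, it gets $|f_\delta(|\psi^\delta|^2)|\le C(1+|\psi^\delta|^2+|\psi^\delta|^6)$. It then applies Vitali to obtain $f_\delta(|\psi^\delta|^2)\to f(|\psi|^2)$ in $L^1((0,T)\times\mathbb{T}^3)$, and reads off the statement for a.e.\ $t$.

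\emph{Your route.} You fix a good time first and apply Vitali on $\mathbb{T}^3$, using the subcritical bound $|s\log s|\le C(1+s^{3/2})$.

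\emph{Why your bound is the substantive improvement.} In the paper's version the majorant $|\psi^\delta|^6$ sits exactly at the Sobolev endpoint. The $L^\infty(0,T;H^1)$ bound therefore only makes it bounded in $L^1$, not uniformly integrable: the embedding $H^1\hookrightarrow L^6$ is not compact, so concentration is not excluded. Your majorant $|\psi^\delta|^3$ is bounded in $L^2$, which is what Vitali needs. The paper's space-time argument also goes through once its cubic bound is replaced by yours.

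\emph{The subsequence issue.} The paper's passage from $L^1_{t,x}$ convergence to convergence for a.e.\ $t$ (``equivalently'') silently requires a further subsequence; you make this explicit. You are also right that the lemma's hypotheses alone give nothing stronger. For instance, take $\psi^{\delta_n}=2\chi_{I_n}(t)$, where the intervals $I_n$ have shrinking length and sweep $(0,T)$ infinitely often. This tends to $0$ in $L^2_{t,x}$ with bounded $H^1$ norm, yet at every $t$ the integrals oscillate between values near $4\log 4$ and near $0$.

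\emph{Your alternative fix.} Using the bound on $\partial_t\psi^\delta$ in $L^\infty(0,T;H^{-1})$ and Aubin--Lions--Simon to get convergence in $C([0,T];L^2)$ is the better fix in context. The energy-equality argument that follows needs the nonlinear-energy limit, and weak convergence of $\nabla\psi^\delta(t)$, at every fixed $t$ before reversing time at an arbitrary $s$.
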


\begin{proof}
Because \(\psi^{\delta}\to\psi\) in
\(L^{2}\bigl((0,T)\times \mathbb{T}^3\bigr)\), up to a subsequence we have
\[
\psi^{\delta}(t,x)\longrightarrow\psi(t,x)
\qquad\text{for a.e. }(t,x)\in(0,T)\times \mathbb{T}^3.
\]
Hence
\(|\psi^{\delta}(t,x)|^{2}\to|\psi(t,x)|^{2}\) a.e. and,
since \(f_{\delta}\) is continuous in \(\rho\),
\[
f_{\delta}\!\bigl(|\psi^{\delta}|^{2}\bigr)
\longrightarrow
f\!\bigl(|\psi|^{2}\bigr)
\quad\text{a.e. on }(0,T)\times \mathbb{T}^3.
\]

For \(0<\delta\le1\) and \(\rho\ge0\) one has the elementary estimate
\[
(\rho+\delta)\log(\rho+\delta)
\;\le\;
C\,\bigl(1+\rho+\rho^{3}\bigr),
\]
which yields
\begin{equation}
\label{right hand of f}
\bigl|f_{\delta}\!\bigl(|\psi^{\delta}|^{2}\bigr)\bigr|
\le
C\,\bigl(1+|\psi^{\delta}|^{2}+|\psi^{\delta}|^{6}\bigr)
\qquad\text{on }(0,T)\times \mathbb{T}^3.
\end{equation}
The uniform bound in
\(L^{\infty}\!\bigl(0,T;H^{1}(\mathbb{T}^3)\bigr)\) implies, by Sobolev
embedding, a uniform bound in
\(L^{\infty}\!\bigl(0,T;L^{6}(\mathbb{T}^3)\bigr)\).
Consequently the right–hand side of \eqref{right hand of f}  is uniformly integrable on
\((0,T)\times \mathbb{T}^3\).

Thus we have a.e. convergence together with uniform integrability;
Vitali’s convergence theorem gives
\[
f_{\delta}\!\bigl(|\psi^{\delta}|^{2}\bigr)
\;\longrightarrow\;
f\!\bigl(|\psi|^{2}\bigr)
\quad\text{in }L^{1}\!\bigl((0,T)\times \mathbb{T}^3\bigr).
\]
Equivalently, for a.e. \(t\in(0,T)\)
\[
\int_{\mathbb{T}^3}
f_{\delta}\!\bigl(|\psi^{\delta}(t,\cdot)|^{2}\bigr)\,dx
\xrightarrow[\delta\to0]{}
\int_{\mathbb{T}^3}
f\!\bigl(|\psi(t,\cdot)|^{2}\bigr)\,dx .
\]
\end{proof}


\begin{proposition}
\label{proposition-energy equality}
If   \(\psi\) is a unique weak solution of \eqref{eq:logNLS-limit}, then it conserves the energy in the following sense: 
\begin{equation}
\label{energy equality for Sho}
E(\psi(t))=E(\psi(0))\;\;\;\text{ for any } t\geq 0,\end{equation}
where 
\[
E(\psi(t))
:=
\frac{\hbar^2}{2}\int_{\mathbb{T}^3}|\nabla\psi(t,x)|^2\,dx
+
\int_{\mathbb{T}^3}|\psi(t,x)|^2\log|\psi(t,x)|^2\,dx,
\]
 and therefore
\begin{equation}\label{eq:local-kinetic-conv}
\int_0^T\int_{\mathbb{T}^3}|\nabla\psi^\delta|^2\,dx\,dt
\to
\int_0^T\int_{\mathbb{T}^3}|\nabla\psi|^2\,dx\,dt.
\end{equation}

\end{proposition}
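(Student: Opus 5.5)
The plan is to prove the two inequalities $E(\psi(t))\le E(\psi(0))$ and $E(\psi(t))\ge E(\psi(0))$ separately. The first comes from lower semicontinuity along the approximation. The second comes from a time-reversal argument combined with the uniqueness part of Theorem~\ref{weak solution to log SE}, in the spirit of \cite{Haraux1981}. Once equality holds, the gradient convergence \eqref{eq:local-kinetic-conv} follows by comparing energies.

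\textbf{Step 1: $E(\psi(t))\le E(\psi_0)$.} For each $\delta$ we have conservation of $E_\delta(\psi^\delta(t))=E_\delta(\psi_0)$. Since $f_\delta(|\psi_0|^2)\to|\psi_0|^2\log|\psi_0|^2$ pointwise and is dominated by $C(1+|\psi_0|^6)$, the right-hand side tends to $E(\psi_0)$.

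On the left, integrate over any interval $(t_1,t_2)$. Lemma~\ref{lemma:local-nonlinear-energy-convergence} gives convergence of the nonlinear part in $L^1((t_1,t_2)\times\mathbb{T}^3)$. Weak lower semicontinuity of $\int_{t_1}^{t_2}\!\int|\nabla\psi^\delta|^2$ under \eqref{eq:weak-star-H1} handles the kinetic part. Together these give $\int_{t_1}^{t_2}E(\psi(t))\,dt\le (t_2-t_1)E(\psi_0)$, hence the bound for a.e. $t$.

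To upgrade to every $t$, I would use that $\psi\in C([0,T];L^2)\cap L^\infty(0,T;H^1)$, which follows from the bound on $\partial_t\psi$ in $H^{-1}$. This makes $\psi$ weakly continuous in $H^1$. The nonlinear term is continuous in $t$ by strong $L^2$ continuity, interpolation to $L^p$ with $p<6$, and the growth bound. Then lower semicontinuity gives $E(\psi(t))\le E(\psi_0)$ for all $t$.

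\textbf{Step 2: reverse inequality.} Fix $t_0$ and restart the approximation at time $t_0$ with data $\psi(t_0)\in W(\mathbb{T}^3)$, evolving backward in time. The equation is time-reversible via $\tilde\psi(s,x)=\overline{\psi(t_0-s,x)}$, which solves the same logarithmic equation with data $\overline{\psi(t_0)}$. The conjugate is a weak solution in the sense of \eqref{eq:weak-form-limit}, with the same energy.

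Applying Theorem~\ref{weak solution to log SE} and Step~1 to this data yields a weak solution $\chi$ on $[0,t_0]$ with $E(\chi(s))\le E(\psi(t_0))$. By the uniqueness statement, $\chi=\tilde\psi$. Evaluating at $s=t_0$ gives $E(\psi_0)\le E(\psi(t_0))$. Combined with Step~1, this proves \eqref{energy equality for Sho}.

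\textbf{Main obstacle.} The delicate point is that uniqueness in Theorem~\ref{weak solution to log SE} is stated for solutions constructed in the proof. I must therefore check that the time-reversed $\tilde\psi$ lies in the class where the Gronwall argument with Lemma~\ref{control of complex} applies. That argument needs only $L^\infty H^1$ regularity and the weak formulation, so that the pairing $\langle\partial_t w,w\rangle_{H^{-1},H^1}$ makes sense. I would verify this directly, so uniqueness holds in the whole class $L^\infty(0,T;H^1)\cap W^{1,\infty}(0,T;H^{-1})$. One also needs $\partial_t\psi\in L^\infty H^{-1}$, which holds because $(\log|\psi|^2+1)\psi\in L^\infty L^2$ by the same bound used for $F_\delta$.

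\textbf{Step 3: kinetic convergence.} Integrating the conserved energies in time gives
\[
\frac{\hbar^2}{2}\int_0^T\!\!\int|\nabla\psi^\delta|^2=TE_\delta(\psi_0)-\int_0^T\!\!\int f_\delta(|\psi^\delta|^2).
\]
By Lemma~\ref{lemma:local-nonlinear-energy-convergence} and Step~1, the right-hand side converges to
\[
TE(\psi_0)-\int_0^T\!\!\int|\psi|^2\log|\psi|^2=\frac{\hbar^2}{2}\int_0^T\!\!\int|\nabla\psi|^2,
\]
where the last equality is the energy equality just proved. This is exactly \eqref{eq:local-kinetic-conv}.
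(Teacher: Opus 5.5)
Your proposal is correct and follows the same route as the paper. The energy inequality comes from conservation of $E_\delta$, lower semicontinuity of the kinetic term and Lemma~\ref{lemma:local-nonlinear-energy-convergence}; the reverse inequality comes from the time-reversed solution $\overline{\psi(t_0-s,\cdot)}$; and these give \eqref{eq:local-kinetic-conv}. Your version is more careful than the paper's on two points it leaves implicit: you justify the energy inequality for the reversed solution by identifying it, through uniqueness in $L^\infty H^1\cap W^{1,\infty}H^{-1}$, with a limit of the regularized scheme, and you upgrade the inequality from a.e.\ $t$ to every $t$.
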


\begin{proof}
Let $\{\psi^\delta\}_{\delta>0}$ be a family of smooth solutions to the regularized problem
with  initial data $\psi^\delta(0)\to \psi(0)$ in $L^2(\mathbb{T}^3)$, and such that
\[
\psi^\delta \to \psi \quad \text{in } L^2(\mathbb{T}^3) \ \text{and a.e.}
\]

\medskip

\noindent
\textbf{Step 1. Energy identity at the approximate level.}
For each $\delta>0$, the solution $\psi^\delta$ is smooth, and a standard computation yields
\[
E(\psi^\delta(t))
=
E(\psi^\delta(0))
\quad \text{for all } t\ge 0,
\]
where
\[
E(\psi^\delta(t))
:=
\frac{\hbar^2}{2}\int_{\mathbb{T}^3} |\nabla \psi^\delta(t,x)|^2\,dx
+
\int_{\mathbb{T}^3}( |\psi^\delta(t,x)|^2+\delta) \log\big(|\psi^\delta(t,x)|^2+\delta\big)\,dx.
\]
\medskip

\noindent
\textbf{Step 2. Passage to the limit.} We pass to the limit $\delta\to 0$ to recover the energy inequality:

\medskip

\noindent
\emph{(i) Kinetic term.}
By weak convergence $\nabla \psi^\delta \rightharpoonup \nabla \psi$ in $L^2(\mathbb{T}^3)$ and lower semicontinuity,
\[
\int_{\mathbb{T}^3 }|\nabla \psi(t,x)|^2\,dx
\leq
\liminf_{\delta\to 0}
\int_{\mathbb{T}^3} |\nabla \psi^\delta(t,x)|^2\,dx.
\]

\medskip

\noindent
\emph{(ii) Logarithmic term.}
By Lemma \ref{lemma:local-nonlinear-energy-convergence},  one obtains that 
\[
\int_{\mathbb{T}^3} (|\psi^\delta(t,x)|^2+\delta) \log\big(|\psi^\delta(t,x)|^2+\delta\big)\,dx\to \int_{\mathbb{T}^3} |\psi(t,x)|^2 \log |\psi(t,x)|^2\,dx.
\]

\medskip

\noindent
\emph{(iii) Initial data.}
Similarly, we can show that \[
\int_{\mathbb{T}^3} (|\psi^\delta_{0}|^2+\delta) \log\big(|\psi^\delta_{0}|^2+\delta\big)\,dx\to \int_{\mathbb{T}^3}|\psi_0|^2 \log |\psi_0|^2\,dx.
\]

Thus, 
$
E(\psi^\delta(0))
\to
E(\psi(0)).
$

\medskip

\noindent
\textbf{Step 3. Energy inequality.} Combining the above estimates, we obtain
\begin{equation}
\label{energy inequality}
E(\psi(t))\leq 
E(\psi(0)),\quad\text{ for all} \,t\geq 0.
\end{equation}
\medskip

\noindent
\textbf{Step 4. Energy equality.}
Fix any $s\in [0,T]$ and define
\[
w(t,x):=\overline{\psi(s-t,x)}, \qquad t\in [0,s].
\]
To finish this step, we need the following lemma on $w(t,x)$.
\begin{lemma}[Time-reversal invariance]
\label{Time-reversal invariance}
Let $\psi$ be a weak solution of equation \eqref{eq:logNLS-limit}
in the sense of \eqref{eq:weak-form-limit}.  Fix $s\in (0,T]$ and define
\[
w(t,x):=\overline{\psi(s-t,x)}, \qquad (t,x)\in [0,s]\times \mathbb{T}^3.
\]
Then $w$ is a weak solution of  equation \eqref{eq:logNLS-limit}
 on $(0,s)\times \mathbb{T}^3$, with initial datum
\[
w(0,x)=\overline{\psi(s,x)}.
\]
More precisely, for every $\varphi\in C_c^\infty([0,s)\times \mathbb{T}^3)$,
\[
\int_0^s \int_{\mathbb{T}^3}
\left(
-i\hbar w\,\partial_t\varphi
-\frac{\hbar^2}{2}\nabla w\cdot \nabla \varphi
-(\log |w|^2+1)w\,\varphi
\right)\,dx\,dt
=
i\hbar \int_{\mathbb{T}^3} \overline{\psi(s,x)}\,\varphi(0,x)\,dx.
\]
\end{lemma}

\begin{proof}
Let $\varphi\in C_c^\infty([0,s)\times \mathbb{T}^3)$ and define
\[
\widetilde{\varphi}(\tau,x):=\overline{\varphi(s-\tau,x)},
\qquad (\tau,x)\in [0,s]\times \mathbb{T}^3.
\]
Then $\widetilde{\varphi}\in C_c^\infty([0,s)\times \mathbb{T}^3)$ and, since
$\operatorname{supp}\varphi \subset [0,s-\varepsilon]\times \mathbb{T}^3$ for some $\varepsilon>0$,
we have $\widetilde{\varphi}(0,x)=0$.

We apply the weak formulation of  \eqref{eq:weak-form-limit} for $\psi$ on $(0,s)$ with test function
$\widetilde{\varphi}$:
\[
\int_0^s \int_{\mathbb{T}^3}
\left(
-i\hbar \psi\,\partial_\tau \widetilde{\varphi}
-\frac{\hbar^2}{2}\nabla \psi\cdot \nabla \widetilde{\varphi}
-(\log |\psi|^2+1)\psi\,\widetilde{\varphi}
\right)\,dx\,d\tau
=0.
\]

Taking complex conjugates and performing the change of variables $t=s-\tau$, we obtain
\[
\int_0^s \int_{\mathbb{T}^3}
\left(
-i\hbar \overline{\psi(s-t,x)}\,\partial_t\varphi
-\frac{\hbar^2}{2}\nabla \overline{\psi(s-t,x)}\cdot \nabla \varphi
-(\log |\psi(s-t,x)|^2+1)\overline{\psi(s-t,x)}\,\varphi
\right)\,dx\,dt
\]
\[
=
i\hbar \int_{\mathbb{T}^3} \overline{\psi(s,x)}\,\varphi(0,x)\,dx.
\]

Since $w(t,x)=\overline{\psi(s-t,x)}$ and $|w|=|\psi(s-t)|$, this is exactly the weak formulation
for $w$ on $(0,s)\times \mathbb{R}^3$ with initial datum $w(0,x)=\overline{\psi(s,x)}$.
\end{proof}

By Lemma \ref{Time-reversal invariance},
 $w$ is a weak solution of  \eqref{eq:logNLS-limit} on $[0,s]$ with initial datum
$w(0)=\overline{\psi(s)}$. Applying the energy inequality \eqref{energy inequality} to $w$, we obtain
\[
E(w(s)) \le E(w(0)).
\]
Since $w(s)=\overline{\psi(0)}$ and $w(0)=\overline{\psi(s)}$, and since the energy depends only on
$|\psi|$ and $|\nabla \psi|$, we have
\[
E(\psi(0)) = E(w(s)) \le E(w(0)) = E(\psi(s)).
\]
On the other hand, applying \eqref{energy inequality} directly to $\psi$, we also have
\[
E(\psi(s)) \le E(\psi(0)).
\]
Therefore
\[
E(\psi(s)) = E(\psi(0)) \qquad \text{for all } s\in [0,T],
\]
hence the energy is conserved on $[0,T]$. Combining this identity with Lemma~\ref{lemma:local-nonlinear-energy-convergence}, we obtain
\[
\int_{\mathbb{T}^3} |\nabla\psi^\delta(t,x)|^2\,dx \to \int_{\mathbb{T}^3} |\nabla\psi(t,x)|^2\,dx
\quad \text{for a.e. } t\in(0,T).
\]
Moreover, we have the uniform bound
\[
\sup_{\delta>0}\sup_{t\in[0,T]} \int_{\mathbb{T}^3} |\nabla\psi^\delta(t,x)|^2\,dx \le C_{T,K} < \infty.
\]
Thus, defining \(f^\delta(t) := \int_{\mathbb{T}^3} |\nabla\psi^\delta(t,x)|^2\,dx\), the functions \(f^\delta\) converge pointwise almost everywhere to \(f(t) := \int_{\mathbb{T}^3} |\nabla\psi(t,x)|^2\,dx\) and are dominated by the integrable constant \(C_{T}\). By the dominated convergence theorem,
\[
\int_0^T\int_{\mathbb{T}^3} |\nabla\psi^\delta(t,x)|^2\,dx\,dt
\to
\int_0^T\int_{\mathbb{T}^3} |\nabla\psi(t,x)|^2\,dx\,dt.
\]

\end{proof}


\section{Strong convergence of $\nabla\psi^{\delta}$ in $L^2$}

Our goal of this section is to show, up to a subsequence,
\[
\nabla\psi^\delta \to \nabla\psi
\qquad\text{strongly in }L^2(0,T;L^2( \mathbb{T}^3)),
\]
for every \(T>0\).

\begin{proposition}[Strong convergence of the gradients]
\label{prop:strong-convergence-gradient}
Under the assumptions of Proposition~\ref{proposition-energy equality}, up to extraction of a subsequence,
\[
\nabla \psi^\delta \to \nabla \psi
\qquad\text{strongly in } L^2(0,T;L^2( \mathbb{T}^3)).
\]
 Equivalently, the sequence \(\{\nabla\psi^\delta\}\) is relatively compact in
\[
L^2\bigl([0,T];L^2(\mathbb{T}^3)\bigr).
\]
In particular,
\[
\nabla\sqrt{\rho^\delta}\to \nabla\sqrt{\rho}
\qquad\text{strongly in }L^2(0,T;L^2( \mathbb{T}^3)),
\]
and
\[
\Lambda^\delta\to \Lambda
\qquad\text{strongly in } L^2(0,T;L^2( \mathbb{T}^3)).
\]
\end{proposition}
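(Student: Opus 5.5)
The plan is to upgrade the weak convergence of the gradients to strong convergence by the classical ``weak convergence plus convergence of norms'' argument in the Hilbert space $L^2((0,T)\times\mathbb{T}^3)$. By Proposition~\ref{prop:compactness} we already have $\nabla\psi^\delta\rightharpoonup\nabla\psi$ weakly in $L^2(0,T;L^2(\mathbb{T}^3))$; this follows from the weak-$\ast$ convergence in $L^\infty(0,T;H^1)$, since $L^2(0,T;L^2)$-weak convergence is implied on the bounded interval. Proposition~\ref{proposition-energy equality} gives the convergence of norms \eqref{eq:local-kinetic-conv}. Expanding
\[
\|\nabla\psi^\delta-\nabla\psi\|_{L^2_{t,x}}^2
=\|\nabla\psi^\delta\|_{L^2_{t,x}}^2-2\Re\int_0^T\!\!\int_{\mathbb{T}^3}\nabla\psi^\delta\cdot\nabla\overline{\psi}\,dx\,dt+\|\nabla\psi\|_{L^2_{t,x}}^2,
\]
the first term tends to $\|\nabla\psi\|^2$ by \eqref{eq:local-kinetic-conv} and the middle term tends to $-2\|\nabla\psi\|^2$ by weak convergence, so the whole expression tends to zero. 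Relative compactness follows, since every subsequence has a further subsequence along which this argument applies.

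The substantive input is \eqref{eq:local-kinetic-conv}. Here I would check carefully that the energy identity $E(\psi(t))=E(\psi_0)$, the conservation $E_\delta(\psi^\delta(t))=E_\delta(\psi_0)$, and the convergence $E_\delta(\psi_0)\to E(\psi_0)$ combine with Lemma~\ref{lemma:local-nonlinear-energy-convergence} to give
\[
\tfrac{\hbar^2}{2}\|\nabla\psi^\delta(t)\|_{L^2}^2=E_\delta(\psi_0)-\int f_\delta(|\psi^\delta(t)|^2)\to E(\psi_0)-\int f(|\psi(t)|^2)=\tfrac{\hbar^2}{2}\|\nabla\psi(t)\|_{L^2}^2
\]
for a.e.\ $t$. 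Dominated convergence in $t$, using the uniform $H^1$ bound \eqref{H1 bound}, then yields \eqref{eq:local-kinetic-conv}. A cleaner alternative is to integrate in time directly: the $L^1_{t,x}$ convergence from the proof of Lemma~\ref{lemma:local-nonlinear-energy-convergence} gives the time-integrated version with no a.e.\ issue.

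For the hydrodynamic variables, strong convergence of $\sqrt{\rho^\delta}$ itself is not the issue: $|\sqrt{\rho^\delta}-\sqrt{\rho}|\le|\psi^\delta-\psi|$. The delicate point, and the one I expect to be the main obstacle, is that the polar factors $\phi^\delta$ need not converge, so the formula $\nabla\sqrt{\rho^\delta}=\Re(\overline{\phi^\delta}\nabla\psi^\delta)$ alone does not give strong convergence. I would write
\[
\Lambda^\delta-\Lambda=\hbar\,\Im\bigl(\overline{\phi^\delta}(\nabla\psi^\delta-\nabla\psi)\bigr)+\hbar\,\Im\bigl((\overline{\phi^\delta}-\overline{\phi})\nabla\psi\bigr).
\]
The first term tends to zero in $L^2$ because $|\phi^\delta|\le1$. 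For the second term, on $\{\psi\neq0\}$ we have $\phi^\delta=\psi^\delta/|\psi^\delta|\to\psi/|\psi|=\phi$ a.e., using the a.e.\ convergence \eqref{eq:ae-conv}. On $\{\psi=0\}$ we have $\nabla\psi=0$ a.e.\ by the standard property of Sobolev functions. Hence the second term tends to zero a.e., with domination by $2|\nabla\psi|\in L^2$, and dominated convergence finishes it. The same decomposition handles $\nabla\sqrt{\rho^\delta}$ with $\Re$ in place of $\Im$.
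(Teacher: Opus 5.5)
Your proposal is correct and follows the paper's route. Weak convergence of $\nabla\psi^\delta$, together with convergence of the $L^2((0,T)\times\mathbb{T}^3)$ norms coming from the energy equality, gives strong convergence in that Hilbert space (your time-integrated derivation of the norm convergence is a slightly cleaner version of the paper's pointwise-in-time argument). For $\nabla\sqrt{\rho^\delta}$ and $\Lambda^\delta$ the paper simply cites Lemma 3 of Antonelli--Marcati, and your decomposition is precisely the argument behind that lemma, carried out directly in space-time: a.e.\ convergence of the polar factors on $\{\psi\neq0\}$, $\nabla\psi=0$ a.e.\ on $\{\psi=0\}$, then dominated convergence.
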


\begin{proof}
By Proposition~\ref{prop:compactness} we have 
\[
\nabla\psi^\delta \rightharpoonup \nabla\psi
\qquad\text{weakly in } L^2(0,T;L^2( \mathbb{T}^3)).
\]
On the other hand, by \eqref{eq:local-kinetic-conv} in Proposition~\ref{proposition-energy equality},  we have 
\[
\int_0^T\int_{\mathbb{T}^3} |\nabla\psi^\delta|^2\,dx\,dt
\to
\int_0^T\int_{\mathbb{T}^3} |\nabla\psi|^2\,dx\,dt.
\]
Since \(L^2(0,T;L^2( \mathbb{T}^3))\)  is a Hilbert space, weak convergence together with convergence of norms implies
\[
\nabla\psi^\delta \to \nabla\psi
\qquad\text{strongly in }L^2((0,T)\times \mathbb{T}^3).
\]
The strong convergence of \(\nabla\sqrt{\rho^\delta}\) and \(\Lambda^\delta\) then follows from Lemma 3 in \cite{AntonelliMarcati2009}.

\end{proof}

\section{The weak formulation induced by the Schr\"odinger approximation}
\label{sec:induced}

In this section, we derive the weak formulation of the quantum hydrodynamic system induced by the Schr\"odinger approximation, and pass to the limit by means of the compactness results established in Proposition~\ref{prop:compactness} and Proposition~\ref{prop:strong-convergence-gradient}. This completes the proof of the main theorem.

\subsection{Mollified approximation and induced hydrodynamic variables}

Let \(\eta\in C_c^\infty(\mathbb{R}\times \mathbb{T}^3)\) be a standard nonnegative mollifier with
$
\int_{\mathbb{R}\times\mathbb{T}^3}\eta=1,$
and define
\[
\eta_\varepsilon(t,x)=\varepsilon^{-4}\eta(t/\varepsilon,x/\varepsilon).
\]
Extending \(\psi^\delta\) by \(0\) outside \([0,T]\), we set
\[
\psi^\delta_\varepsilon:=\eta_\varepsilon * \psi^\delta.
\]
Then \(\psi^\delta_\varepsilon\in C^\infty(\mathbb{R}\times\mathbb{T}^3)\) and satisfies the mollified equation
\begin{equation}\label{eq:moll-eq-rig}
i\hbar\,\partial_t\psi^\delta_\varepsilon+\frac{\hbar^2}{2}\Delta\psi^\delta_\varepsilon
=
\eta_\varepsilon * F_\delta(\psi^\delta).
\end{equation}
Define the associated hydrodynamic quantities
\[
\rho^\delta_\varepsilon:=|\psi^\delta_\varepsilon|^2,
\qquad
J^\delta_\varepsilon:=\hbar\,\Im\!\bigl(\psi^\delta_\varepsilon\nabla\overline{\psi^\delta_\varepsilon}\bigr),
\]
and introduce the commutator
\begin{equation}\label{eq:Gepsdelta-rig}
G_{\varepsilon,\delta}
:=
\eta_\varepsilon * F_\delta(\psi^\delta)-F_\delta(\psi^\delta_\varepsilon).
\end{equation}

\medskip

A direct computation gives the following continuity equation with commutator remainder.

\begin{proposition}\label{prop:continuity-eps-delta}
The quantities \((\rho^\delta_\varepsilon,J^\delta_\varepsilon)\) satisfy
\[
\partial_t\rho^\delta_\varepsilon+\operatorname{div}J^\delta_\varepsilon
=
R_{\varepsilon,\delta}^{(0)},
\]
where
\[
R_{\varepsilon,\delta}^{(0)}
:=
\frac{2}{\hbar}\,\Im\!\bigl(\overline{\psi^\delta_\varepsilon}G_{\varepsilon,\delta}\bigr).
\]
Equivalently, for every \(\zeta\in C_c^\infty([0,T)\times\mathbb{T}^3)\),
\begin{equation}\label{eq:weak-with-R0}
\int_0^T\!\!\int_{\mathbb{T}^3}
\Bigl(
\rho^\delta_\varepsilon\,\partial_t\zeta
+
J^\delta_\varepsilon\cdot\nabla\zeta
-
R^{(0)}_{\varepsilon,\delta}\zeta
\Bigr)\,dx\,dt
+
\int_{\mathbb{T}^3}\rho^\delta_\varepsilon(0,x)\zeta(0,x)\,dx
=
0.
\end{equation}
\end{proposition}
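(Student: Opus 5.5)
We argue exactly as in Step~1 of the formal derivation of the continuity equation, now applied to the smooth function \(\psi^\delta_\varepsilon\), which solves \eqref{eq:moll-eq-rig} classically. Subtracting and adding \(F_\delta(\psi^\delta_\varepsilon)\) on the right-hand side of \eqref{eq:moll-eq-rig} puts the equation in the form
\[
i\hbar\,\partial_t\psi^\delta_\varepsilon+\frac{\hbar^2}{2}\Delta\psi^\delta_\varepsilon
=
F_\delta(\psi^\delta_\varepsilon)+G_{\varepsilon,\delta},
\]
with \(G_{\varepsilon,\delta}\) as in \eqref{eq:Gepsdelta-rig}. Since \(\psi^\delta_\varepsilon\) is smooth, every manipulation below is pointwise and classical.

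\textbf{Computation of \(\partial_t\rho^\delta_\varepsilon\).} We have \(\partial_t\rho^\delta_\varepsilon=2\Re(\overline{\psi^\delta_\varepsilon}\,\partial_t\psi^\delta_\varepsilon)\), and the equation gives
\[
\partial_t\psi^\delta_\varepsilon=\frac{i\hbar}{2}\Delta\psi^\delta_\varepsilon-\frac{i}{\hbar}\bigl(F_\delta(\psi^\delta_\varepsilon)+G_{\varepsilon,\delta}\bigr).
\]
The nonlinear term drops out, because \(\overline{\psi^\delta_\varepsilon}F_\delta(\psi^\delta_\varepsilon)=f'_\delta(\rho^\delta_\varepsilon)\rho^\delta_\varepsilon\) is real. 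The Laplacian term becomes \(-\hbar\Im(\overline{\psi^\delta_\varepsilon}\Delta\psi^\delta_\varepsilon)\), which equals \(-\operatorname{div}J^\delta_\varepsilon\) by the divergence identity of Step~1. The commutator term contributes
\[
2\Re\Bigl(-\tfrac{i}{\hbar}\overline{\psi^\delta_\varepsilon}G_{\varepsilon,\delta}\Bigr)
=\frac{2}{\hbar}\Im\bigl(\overline{\psi^\delta_\varepsilon}G_{\varepsilon,\delta}\bigr),
\]
using \(\Re(-iz)=\Im z\). This gives \(R^{(0)}_{\varepsilon,\delta}\).

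\textbf{Sign convention for the current.} The paper writes \(J^\delta_\varepsilon=\hbar\Im(\psi^\delta_\varepsilon\nabla\overline{\psi^\delta_\varepsilon})\), which is the negative of \(\hbar\Im(\overline{\psi}\nabla\psi)\). I will follow the convention \(J=\hbar\Im(\overline\psi\nabla\psi)\) used in Step~1 and in Theorem~\ref{thm:intro-main}, and note explicitly that the sign must be read this way for the identity to hold.

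\textbf{Weak formulation \eqref{eq:weak-with-R0}.} Multiply the pointwise identity by \(\zeta\in C_c^\infty([0,T)\times\mathbb{T}^3)\) and integrate over \((0,T)\times\mathbb{T}^3\). Integrate by parts in \(x\); there are no boundary terms on the torus. Integrate by parts in \(t\); this produces the boundary term \(-\int\rho^\delta_\varepsilon(0)\zeta(0)\), and nothing at \(t=T\) since \(\zeta\) vanishes there. Changing signs yields exactly \eqref{eq:weak-with-R0}.

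\textbf{Main obstacle: the cutoff at \(t=0\).} Because \(\psi^\delta\) was extended by zero outside \([0,T]\), \(\psi^\delta_\varepsilon\) near \(t=0\) does not literally satisfy \eqref{eq:moll-eq-rig}: time-differentiating the cutoff produces a jump term \(\eta_\varepsilon(\cdot)\,\psi_0\) supported in \(|t|\lesssim\varepsilon\). I would handle this in one of two ways.
\begin{itemize}
\item Extend \(\psi^\delta\) to negative times by solving the regularized equation backward, which Proposition~\ref{prop:global-H1-reglogNLS} allows by time reversibility. The mollified equation then holds on all of \([0,T)\) for \(\varepsilon\) small relative to \(\operatorname{supp}\zeta\).
\item Alternatively, use a time-centered mollifier only in the interior, and absorb the boundary layer into \(R^{(0)}_{\varepsilon,\delta}\).
\end{itemize}
The first option keeps the statement exactly as written, so I would adopt it.
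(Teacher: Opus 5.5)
Your pointwise computation is the same direct computation the paper invokes (it gives no further detail), and it is correct. Your two caveats are genuine defects of the paper's setup: the sign convention in $J^\delta_\varepsilon=\hbar\,\Im(\psi^\delta_\varepsilon\nabla\overline{\psi^\delta_\varepsilon})$, and the failure of \eqref{eq:moll-eq-rig} in a time layer of width $\varepsilon$ under the zero extension.

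The step that fails as you state it is the last one. Testing $\partial_t\rho^\delta_\varepsilon+\operatorname{div}J^\delta_\varepsilon=R^{(0)}_{\varepsilon,\delta}$ against $\zeta$ and integrating by parts gives
\[
-\int_{\mathbb{T}^3}\rho^\delta_\varepsilon(0,x)\zeta(0,x)\,dx
-\int_0^T\!\!\int_{\mathbb{T}^3}\bigl(\rho^\delta_\varepsilon\,\partial_t\zeta+J^\delta_\varepsilon\cdot\nabla\zeta\bigr)\,dx\,dt
=\int_0^T\!\!\int_{\mathbb{T}^3}R^{(0)}_{\varepsilon,\delta}\,\zeta\,dx\,dt.
\]
After changing signs, the remainder enters as $+R^{(0)}_{\varepsilon,\delta}\zeta$, not $-R^{(0)}_{\varepsilon,\delta}\zeta$. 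The formula \eqref{eq:weak-with-R0} as printed is the weak form of $\partial_t\rho^\delta_\varepsilon+\operatorname{div}J^\delta_\varepsilon=-R^{(0)}_{\varepsilon,\delta}$. So the ``Equivalently'' in the statement contains a sign typo, and it cannot be obtained exactly unless $R^{(0)}_{\varepsilon,\delta}\equiv0$. You should flag this the same way you flagged the sign of $J$, instead of claiming an exact match. It is harmless downstream, since only $R^{(0)}_{\varepsilon,\delta}\to0$ is used.

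Your fix also handles only one end of the time interval. If you extend backward but still extend by zero past $T$, the same jump layer appears near $t=T$. That is why you need $\varepsilon$ small relative to $\operatorname{supp}\zeta$. The proposition, however, is asserted for fixed $(\varepsilon,\delta)$ and every $\zeta\in C_c^\infty([0,T)\times\mathbb{T}^3)$.

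Extend in both directions instead. Proposition~\ref{prop:global-H1-reglogNLS} gives the solution on any $[0,T']$, and the symmetry $\psi(t)\mapsto\overline{\psi(-t)}$ of the regularized equation gives it on $[-T',0]$. Then \eqref{eq:moll-eq-rig} holds classically on $[0,T]$ as soon as $\varepsilon$ is below the extension margin, independently of $\zeta$.

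This changes the paper's construction, since $G_{\varepsilon,\delta}$ and $\rho^\delta_\varepsilon(0)$ now refer to the extended solution. It is in fact what the later limit requires. With the zero extension and a time-symmetric mollifier, $\psi^\delta_\varepsilon(0)\to\frac12\psi_0$ rather than $\psi_0$.
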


\medskip

Next, we derive the momentum identity with an commutator remainder in stress form.

\begin{proposition}\label{prop:momentum-eps-delta}
The quantities $(\rho^\delta_\varepsilon,J^\delta_\varepsilon)$ satisfy
\begin{equation}\label{eq:mom-stress-eps-delta}
\partial_t J^\delta_\varepsilon+\nabla P_\delta(\rho^\delta_\varepsilon)
=
-\hbar^2\operatorname{div}\Re\!\bigl(\nabla\psi^\delta_\varepsilon\otimes\nabla\overline{\psi^\delta_\varepsilon}\bigr)
+\frac{\hbar^2}{4}\Delta\nabla\rho^\delta_\varepsilon
+R^{(1)}_{\varepsilon,\delta},
\end{equation}
where
\[
R^{(1)}_{\varepsilon,\delta}
=
\Re\!\bigl(G_{\varepsilon,\delta}\nabla\overline{\psi^\delta_\varepsilon}
-
\overline{\psi^\delta_\varepsilon}\nabla G_{\varepsilon,\delta}\bigr).
\]
Moreover, after polar decomposition, one obtains the weak formulation
\begin{equation}\label{eq:weak-qhd-momentum-eps-delta}
\begin{aligned}
\int_0^T\!\!\int_{\mathbb{T}^3}
\Big(
J^\delta_\varepsilon\cdot \partial_t\varphi
+
\Lambda^\delta_\varepsilon\otimes\Lambda^\delta_\varepsilon : \nabla\varphi
+
P_\delta(\rho^\delta_\varepsilon)\,\operatorname{div}\varphi
+
\hbar^2\,\nabla\sqrt{\rho^\delta_\varepsilon}\otimes\nabla\sqrt{\rho^\delta_\varepsilon}:\nabla\varphi
\\
-
\frac{\hbar^2}{4}\rho^\delta_\varepsilon\,\Delta(\operatorname{div}\varphi)
-
R^{(1)}_{\varepsilon,\delta}\cdot\varphi
\Big)\,dx\,dt
+
\int_{\mathbb{T}^3}
J^\delta_\varepsilon(0,x)\cdot\varphi(0,x)\,dx
=0
\end{aligned}
\end{equation}
for every $\varphi\in C_c^\infty([0,T)\times\mathbb{T}^3;\mathbb{R}^3)$.
\end{proposition}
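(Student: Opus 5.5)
The plan is to differentiate $J^\delta_\varepsilon=\hbar\,\Im(\psi^\delta_\varepsilon\nabla\overline{\psi^\delta_\varepsilon})$ in time, using the mollified equation \eqref{eq:moll-eq-rig} rewritten as
\[
\partial_t\psi^\delta_\varepsilon=\frac{i\hbar}{2}\Delta\psi^\delta_\varepsilon-\frac{i}{\hbar}F_\delta(\psi^\delta_\varepsilon)-\frac{i}{\hbar}G_{\varepsilon,\delta}.
\]
Since $\psi^\delta_\varepsilon$ is smooth in $(t,x)$, all manipulations are classical, and the computation splits linearly into three groups of terms.

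\textbf{Dispersive part.} The contribution of $\frac{i\hbar}{2}\Delta\psi^\delta_\varepsilon$ is handled exactly as in Step~2 of Subsection~\ref{subsec:formal-QE}. Expanding
\[
\partial_t\Im(\psi\nabla\overline\psi)=\Im(\partial_t\psi\,\nabla\overline\psi)+\Im(\psi\nabla\partial_t\overline\psi)
\]
and using the identity
\[
\tfrac12\Re(\overline\psi\nabla\Delta\psi-\Delta\overline\psi\nabla\psi)=-\operatorname{div}\Re(\nabla\psi\otimes\nabla\overline\psi)+\tfrac14\Delta\nabla|\psi|^2,
\]
one obtains $-\hbar^2\operatorname{div}\Re(\nabla\psi^\delta_\varepsilon\otimes\nabla\overline{\psi^\delta_\varepsilon})+\frac{\hbar^2}{4}\Delta\nabla\rho^\delta_\varepsilon$.

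\textbf{Local nonlinearity.} The term $F_\delta(\psi^\delta_\varepsilon)=f_\delta'(\rho^\delta_\varepsilon)\psi^\delta_\varepsilon$ contributes $\Re(F\nabla\overline\psi-\overline\psi\nabla F)$. With $F=g\psi$ and $g$ real, this equals $g\rho\cdot0$-type cancellations plus $-\rho\nabla g$, so the contribution is
\[
-\rho^\delta_\varepsilon\nabla f_\delta'(\rho^\delta_\varepsilon)=-\nabla P_\delta(\rho^\delta_\varepsilon),
\]
as in Step~3.

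\textbf{Commutator.} The same bilinear form applied to $G_{\varepsilon,\delta}$, which is not of the form (real)$\times\psi$, gives exactly $R^{(1)}_{\varepsilon,\delta}=\Re(G\nabla\overline\psi-\overline\psi\nabla G)$. Summing the three groups yields \eqref{eq:mom-stress-eps-delta}. I will double-check the sign and factor conventions, noting that $J$ here is defined with $\psi\nabla\overline\psi$, which reverses the sign of $\Im$ relative to Section~2. This is bookkeeping, not a real obstacle.

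\textbf{Weak formulation.} For the weak form, I multiply \eqref{eq:mom-stress-eps-delta} by $\varphi\in C_c^\infty([0,T)\times\mathbb{T}^3;\mathbb{R}^3)$ and integrate by parts in $t$ and $x$; the boundary term at $t=0$ is $\int J^\delta_\varepsilon(0)\cdot\varphi(0)$. The terms transform as follows:
\begin{itemize}
\item $\nabla P_\delta$ becomes $-P_\delta\operatorname{div}\varphi$;
\item the divergence of the stress becomes $+\hbar^2\Re(\nabla\psi\otimes\nabla\overline\psi):\nabla\varphi$;
\item $\Delta\nabla\rho$ becomes $-\rho\,\Delta\operatorname{div}\varphi$.
\end{itemize}
Finally, since $\psi^\delta_\varepsilon\in H^1$ for each fixed $t$, I apply the polar decomposition \eqref{eq:rigorous-quadratic-identity} of \cite{AntonelliMarcati2009} pointwise a.e. This replaces $\hbar^2\Re(\nabla\psi^\delta_\varepsilon\otimes\nabla\overline{\psi^\delta_\varepsilon})$ by $\hbar^2\nabla\sqrt{\rho^\delta_\varepsilon}\otimes\nabla\sqrt{\rho^\delta_\varepsilon}+\Lambda^\delta_\varepsilon\otimes\Lambda^\delta_\varepsilon$ and gives \eqref{eq:weak-qhd-momentum-eps-delta}.

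\textbf{Main obstacle.} The only delicate point is justifying $\nabla G_{\varepsilon,\delta}$ and the time cutoff. Extending $\psi^\delta$ by zero outside $[0,T]$ makes \eqref{eq:moll-eq-rig} valid only for $t\in(\varepsilon,T-\varepsilon)$ strictly. I would handle this either by working with test functions supported away from the endpoints and then passing to the limit, or by extending $\psi^\delta$ through the equation (it is global by Proposition~\ref{prop:global-H1-reglogNLS}) instead of by zero. I would choose the latter.
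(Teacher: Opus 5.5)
Your proposal is correct and follows the paper's proof: differentiate $J^\delta_\varepsilon$ using the mollified equation with $G_{\varepsilon,\delta}$ as an extra source, apply the Antonelli--Marcati polar decomposition, and integrate by parts. Your two caveats are genuine refinements the paper skips. The identity holds only for $J^\delta_\varepsilon=\hbar\,\Im(\overline{\psi^\delta_\varepsilon}\nabla\psi^\delta_\varepsilon)$, which is the convention the paper's proof actually uses. The zero extension in time does break the mollified equation near $t=0$ and $t=T$, and extending $\psi^\delta$ by the flow repairs this, backward in time via the symmetry $\psi(t)\mapsto\overline{\psi(-t)}$. Carried out literally, your integration by parts gives $+R^{(1)}_{\varepsilon,\delta}\cdot\varphi$ rather than $-R^{(1)}_{\varepsilon,\delta}\cdot\varphi$ in the weak form; this sign slip is in the statement itself and is harmless because $R^{(1)}_{\varepsilon,\delta}\to0$ in $\mathcal{D}'$.
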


\begin{proof}
Differentiating
\[
J^\delta_\varepsilon=\hbar\,\Im(\overline{\psi^\delta_\varepsilon}\nabla\psi^\delta_\varepsilon)
\]
and using \eqref{eq:moll-eq-rig}, we obtain
\[
\partial_t J^\delta_\varepsilon + \nabla P_\delta(\rho^\delta_\varepsilon)
=
-\hbar^2\,\operatorname{div}\Re\!\bigl(\nabla\psi^\delta_\varepsilon\otimes\nabla\overline{\psi^\delta_\varepsilon}\bigr)
+\frac{\hbar^2}{4}\Delta\nabla\rho^\delta_\varepsilon
+R^{(1)}_{\varepsilon,\delta},
\]
where
\[
R^{(1)}_{\varepsilon,\delta}
=
\Re\!\bigl(G_{\varepsilon,\delta}\nabla\overline{\psi^\delta_\varepsilon}
-
\overline{\psi^\delta_\varepsilon}\nabla G_{\varepsilon,\delta}\bigr).
\]
Applying the polar decomposition lemma (Lemma~3 in \cite{AntonelliMarcati2009}) to $\psi^\delta_\varepsilon$, we have
\[
\hbar^2\,\Re\!\bigl(\nabla\psi^\delta_\varepsilon\otimes\nabla\overline{\psi^\delta_\varepsilon}\bigr)
=
\hbar^2\,\nabla\sqrt{\rho^\delta_\varepsilon}\otimes\nabla\sqrt{\rho^\delta_\varepsilon}
+
\Lambda^\delta_\varepsilon\otimes\Lambda^\delta_\varepsilon,
\]
where
\[
J^\delta_\varepsilon=\sqrt{\rho^\delta_\varepsilon}\,\Lambda^\delta_\varepsilon.
\]
Substituting this identity into \eqref{eq:mom-stress-eps-delta}, we get
\[
\partial_t J^\delta_\varepsilon
+\operatorname{div}\bigl(\Lambda^\delta_\varepsilon\otimes\Lambda^\delta_\varepsilon\bigr)
+\nabla P_\delta(\rho^\delta_\varepsilon)
=
-\hbar^2\,\operatorname{div}\!\bigl(\nabla\sqrt{\rho^\delta_\varepsilon}\otimes\nabla\sqrt{\rho^\delta_\varepsilon}\bigr)
+\frac{\hbar^2}{4}\Delta\nabla\rho^\delta_\varepsilon
+R^{(1)}_{\varepsilon,\delta}.
\]
Testing against $\varphi\in C_c^\infty([0,T)\times\mathbb{T}^3;\mathbb{R}^3)$ and integrating by parts in time and space, we obtain \eqref{eq:weak-qhd-momentum-eps-delta}.
\end{proof}

\subsection{Vanishing of the commutator remainders}
In this subsection, we develop the following lemma for weak vanishing of the commutator remainders. This allows us to control the two commutator terms in the weak formulation. 

\begin{lemma}[Weak vanishing of the commutator remainders]
\label{Lemma commutator}
Let $0<T<\infty$ and $\psi\in L^\infty(0,T;H^1(\mathbb{T}^3))$.  
Let $\delta=\varepsilon\to 0$  and define
\[
G_\varepsilon := G_{\varepsilon,\delta(\varepsilon)},\qquad
R^{(j)}_\varepsilon := R^{(j)}_{\varepsilon,\delta(\varepsilon)},\quad j=0,1.
\]
Then
\begin{equation}\label{eq:G-assumptions-log}
G_\varepsilon \to 0 \quad \text{in } L^2((0,T)\times\mathbb{T}^3) \text{ as } \varepsilon \to 0,
\end{equation}
and that $\psi_\varepsilon$ and $\nabla\psi_\varepsilon$ are uniformly  bounded in $L^2$,
 and further 
\[
R^{(0)}_\varepsilon \to 0 \quad \text{and} \quad R^{(1)}_\varepsilon \to 0 \quad \text{in } \mathcal D'((0,T)\times\mathbb{T}^3) \text{ as } \varepsilon \to 0.
\]
\end{lemma}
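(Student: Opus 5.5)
The plan is to first establish \eqref{eq:G-assumptions-log}, i.e. $G_\varepsilon\to0$ in $L^2((0,T)\times\mathbb{T}^3)$, and then to deduce the distributional vanishing of $R^{(0)}_\varepsilon$ and $R^{(1)}_\varepsilon$ by pairing with test functions.

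\textbf{Step 1: splitting $G_\varepsilon$.} Write $\psi^\varepsilon:=\psi^{\delta(\varepsilon)}$ and $\psi^\varepsilon_\varepsilon:=\eta_\varepsilon*\psi^\varepsilon$. Decompose
\[
G_\varepsilon=\bigl(\eta_\varepsilon*F_\delta(\psi^\varepsilon)-F_\delta(\psi^\varepsilon)\bigr)+\bigl(F_\delta(\psi^\varepsilon)-F_\delta(\psi^\varepsilon_\varepsilon)\bigr).
\]

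\emph{The second bracket.} Use the pointwise bound $|F_\delta(z)|\le C(1+|z|^2)$ from the proof of Theorem~\ref{weak solution to log SE}. By Proposition~\ref{prop:compactness}, $\psi^\varepsilon\to\psi$ strongly in $L^2$ and a.e. The mollification $\psi^\varepsilon_\varepsilon$ also converges to $\psi$ strongly in $L^2$, since $\|\eta_\varepsilon*\psi^\varepsilon-\eta_\varepsilon*\psi\|\le\|\psi^\varepsilon-\psi\|$ and $\eta_\varepsilon*\psi\to\psi$; hence a.e. along a subsequence. Then $F_\delta(\psi^\varepsilon)$ and $F_\delta(\psi^\varepsilon_\varepsilon)$ both converge a.e. to $(\log|\psi|^2+1)\psi$. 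Their squares are dominated by $C(1+|\cdot|^4)$. The family is bounded in $L^\infty_tL^6_x$: for $\psi^\varepsilon$ by \eqref{H1 bound}, and for the mollification by Young's inequality, handling the time-boundary cutoff near $t=0,T$. This gives uniform integrability of $|\cdot|^4$, and Vitali's theorem yields $L^2$ convergence of both terms, hence of their difference to $0$.

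\emph{The first bracket.} Since $F_\delta(\psi^\varepsilon)\to G$ strongly in $L^2$ by the same Vitali argument, write it as $\eta_\varepsilon*(F_\delta(\psi^\varepsilon)-G)+(\eta_\varepsilon*G-G)+(G-F_\delta(\psi^\varepsilon))$. Each term tends to $0$ in $L^2$.

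\textbf{Step 2: $R^{(0)}_\varepsilon$.} For $\zeta\in C_c^\infty$,
\[
\Bigl|\int R^{(0)}_\varepsilon\zeta\Bigr|\le \frac{2}{\hbar}\|\zeta\|_\infty\,\|\psi^\varepsilon_\varepsilon\|_{L^2}\,\|G_\varepsilon\|_{L^2}\to0,
\]
using the uniform $L^2$ bound on $\psi_\varepsilon$.

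\textbf{Step 3: $R^{(1)}_\varepsilon$.} For $R^{(1)}_\varepsilon\cdot\varphi$, the first piece $\Re(G_\varepsilon\nabla\overline{\psi^\varepsilon_\varepsilon})\cdot\varphi$ is bounded by $\|\varphi\|_\infty\|G_\varepsilon\|_{L^2}\|\nabla\psi^\varepsilon_\varepsilon\|_{L^2}$. The second piece contains $\nabla G_\varepsilon$, on which we have no control, so integrate by parts to move the derivative:
\[
-\int\Re(\overline{\psi^\varepsilon_\varepsilon}\nabla G_\varepsilon)\cdot\varphi=\int\Re\bigl(G_\varepsilon(\nabla\overline{\psi^\varepsilon_\varepsilon}\cdot\varphi+\overline{\psi^\varepsilon_\varepsilon}\operatorname{div}\varphi)\bigr).
\]
This is bounded by $C_\varphi\|G_\varepsilon\|_{L^2}(\|\psi^\varepsilon_\varepsilon\|_{L^2}+\|\nabla\psi^\varepsilon_\varepsilon\|_{L^2})\to0$, using the assumed uniform $L^2$ bounds on $\psi_\varepsilon$ and $\nabla\psi_\varepsilon$.

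\textbf{Main obstacle.} The hardest step is the $L^2$ convergence of $G_\varepsilon$. It relies on the simultaneous limit $\delta=\varepsilon\to0$ and on uniform integrability of $|\psi|^4$. This integrability comes from the $L^6$ bound, which must also hold for the time-mollified, zero-extended functions; that is handled by restricting to the compact time support of the test functions.
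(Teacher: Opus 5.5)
Your proof is correct. Your Steps 2 and 3 match the paper's: H\"older for $R^{(0)}_\varepsilon$, and integration by parts to move the derivative off $G_\varepsilon$ in $R^{(1)}_\varepsilon$.

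The genuine difference is in the key step, $G_\varepsilon\to0$ in $L^2$. The paper uses the same splitting into $A_\varepsilon=\eta_\varepsilon*F_\delta(\psi)-F_\delta(\psi)$ and $B_\varepsilon=F_\delta(\psi)-F_\delta(\psi_\varepsilon)$. It then estimates both pieces \emph{quantitatively}, from three ingredients:
\begin{itemize}
\item the mollifier bound $\|\eta_\varepsilon*g-g\|_{L^2}\le C\varepsilon\|\nabla g\|_{L^2}$;
\item the chain-rule bound $|\nabla F_\delta(\psi)|\le C(1+|\log(|\psi|^2+\delta)|)|\nabla\psi|$;
\item a Lipschitz constant $C(1+|\log\delta|)$ for $F_\delta$.
\end{itemize}
This gives the rate $\varepsilon(1+|\log\delta|)$, and the coupling $\delta=\varepsilon$ is chosen precisely so that this rate vanishes.

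You argue \emph{softly} instead. You use only the growth bound $|F_\delta(z)|\le C(1+|z|^2)$ and the uniform $L^\infty_tL^6_x$ bound, which survives the zero-extended space-time mollification. You add the strong $L^2$ convergence (hence a.e. along subsequences) of $\psi^\varepsilon$ and $\eta_\varepsilon*\psi^\varepsilon$ to $\psi$, supplied by Proposition~\ref{prop:compactness}. Vitali's theorem and a three-term argument for the first bracket then finish the job.

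The paper's route, if its ingredients held, would give an explicit rate from uniform bounds alone. Yours gives no rate but is more robust, for two reasons.
\begin{itemize}
\item You never differentiate $F_\delta$, so you need no global Lipschitz bound. In fact $|DF_\delta(z)|$ grows like $\log|z|^2$, so the paper's constant $C(1+|\log\delta|)$ is not valid for large $|z|$, and $H^1(\mathbb{T}^3)$ gives no $L^\infty$ control to compensate.
\item You need no time regularity. The paper's mollifier estimate tacitly requires an $L^2$ time derivative, since $\eta_\varepsilon$ also mollifies in $t$. Yet only $\partial_t\psi^\delta\in L^\infty(0,T;H^{-1})$ is known, and the zero extension jumps at $t=0,T$.
\end{itemize}

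One correction to your closing remark: your argument does not use $\delta=\varepsilon$ at all. It works for any joint limit $\delta,\varepsilon\to0$. The a.e. statements along subsequences upgrade to convergence of the whole (already extracted) family by the usual subsequence-of-a-subsequence argument, since the limit is identified.
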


\begin{proof}
We first establish the bound for $G_\varepsilon$.  
From the gradient estimate of the logarithmic nonlinearity,
\[
|\nabla F_\delta(\psi)| \le C \big(1 + |\log(|\psi|^2+\delta)|\big)\,|\nabla\psi|,
\]
and the Lipschitz property
\[
|F_\delta(z_1) - F_\delta(z_2)| \le C(1 + |\log \delta|) |z_1 - z_2|.
\]

Let us recall
 the commutator
\[
G_{\varepsilon,\delta} := \eta_\varepsilon * F_\delta(\psi) - F_\delta(\psi_\varepsilon),
\]
which we split as
\begin{equation}\label{eq:G-split-log}
G_{\varepsilon,\delta} = A_{\varepsilon,\delta} + B_{\varepsilon,\delta}, \quad
A_{\varepsilon,\delta} := \eta_\varepsilon * F_\delta(\psi) - F_\delta(\psi),\quad
B_{\varepsilon,\delta} := F_\delta(\psi) - F_\delta(\psi_\varepsilon).
\end{equation}

Using the standard mollifier estimate,
\[
\|\eta_\varepsilon * g - g\|_{L^2} \le C \varepsilon \|\nabla g\|_{L^2},
\]
we deduce
\[
\|A_\varepsilon\|_{L^2} \le C \varepsilon (1 + |\log \delta(\varepsilon)|)\|\nabla \psi\|_{L^2},\qquad
\|B_\varepsilon\|_{L^2} \le C \varepsilon (1 + |\log \delta(\varepsilon)|)\|\nabla \psi\|_{L^2}.
\]

Choosing $\varepsilon=\delta(\varepsilon) $ ensures $\varepsilon(1 + |\log \delta(\varepsilon)|) \to 0$, and hence
\[
G_\varepsilon = A_\varepsilon + B_\varepsilon \to 0 \quad \text{in } L^2((0,T)\times \mathbb{T}^3).
\]

\paragraph{Weak vanishing of $R^{(0)}_\varepsilon$.}  
For $\zeta \in C_0^\infty((0,T)\times\mathbb{T}^3)$, Hölder's inequality gives
\[
|\langle R^{(0)}_\varepsilon, \varphi \rangle|
\le \frac{2}{\hbar} \int |\psi_\varepsilon|\,|G_\varepsilon|\,|\zeta|
\le C_\zeta \|\psi_\varepsilon\|_{L^2(\mathrm{supp}\,\zeta)} \|G_\varepsilon\|_{L^2(\mathrm{supp}\,\zeta)} \to 0.
\]

\paragraph{Weak vanishing of $R^{(1)}_\varepsilon$.}  
For $\varphi \in C_0^\infty((0,T)\times\mathbb{T}^3;\mathbb{R}^3)$, integration by parts yields
\[
\langle R^{(1)}_\varepsilon, \varphi \rangle = \Re \int \big( G_\varepsilon \nabla \overline{\psi_\varepsilon} \cdot \varphi + \overline{\psi_\varepsilon} G_\varepsilon\, \mathrm{div}\,\varphi \big)\,dx\,dt,
\]
so that
\[
|\langle R^{(1)}_\varepsilon, \varphi \rangle| \le C_\varphi \big( \|\psi_\varepsilon\|_{H^1(\mathrm{supp}\,\varphi)} \big) \|G_\varepsilon\|_{L^2(\mathrm{supp}\,\varphi)} \to 0.
\]

This proves the lemma.
\end{proof}

\subsection{Passage to the limit and proof of the main theorem}

It remains to pass to the limit in the weak formulations \eqref{eq:weak-with-R0} and
\eqref{eq:weak-qhd-momentum-eps-delta}. By Proposition~\ref{prop:compactness}, we have
\[
\psi^\delta\to\psi
\qquad\text{strongly in }L^2((0,T)\times\mathbb{T}^3),
\]
hence
\[
\sqrt{\rho^\delta}=|\psi^\delta|\to|\psi|=\sqrt{\rho}
\qquad\text{strongly in }L^2((0,T)\times\mathbb{T}^3),
\]
and therefore
\[
\rho^\delta\to\rho
\qquad\text{strongly in }L^1((0,T)\times\mathbb{T}^3).
\]
Moreover, by Proposition~\ref{prop:strong-convergence-gradient},
\[
\nabla\sqrt{\rho^\delta}\to\nabla\sqrt{\rho},
\qquad
\Lambda^\delta\to\Lambda
\qquad\text{strongly in }L^2((0,T)\times\mathbb{T}^3),
\]
so that
\[
J^\delta=\sqrt{\rho^\delta}\Lambda^\delta\to \sqrt{\rho}\Lambda=J
\qquad\text{strongly in }L^1((0,T)\times\mathbb{T}^3).
\]
Similarly,
\[
\Lambda^\delta\otimes\Lambda^\delta\to \Lambda\otimes\Lambda,
\qquad
\nabla\sqrt{\rho^\delta}\otimes\nabla\sqrt{\rho^\delta}
\to
\nabla\sqrt{\rho}\otimes\nabla\sqrt{\rho}
\qquad\text{strongly in }L^1((0,T)\times\mathbb{T}^3),
\]
and
\[
P_\delta(\rho^\delta)\to \rho
\qquad\text{strongly in }L^1((0,T)\times\mathbb{T}^3)
\]
by the pointwise convergence of \(P_\delta(\rho)=\rho-\delta\log(\rho+\delta)\) and the strong convergence of \(\rho^\delta\).

Passing to the limit in \eqref{eq:weak-with-R0} and \eqref{eq:weak-qhd-momentum-eps-delta}, and using the vanishing of the commutator remainders in Lemma \ref{Lemma commutator}, we recover the weak formulation of the collisionless quantum Euler system for the limiting pair \((\rho,J)\).

Meanwhile,  using \eqref{eq:quadratic-identity} to \eqref{energy equality for Sho}, one obtains that the energy equality for weak solution $(\rho,J)$:
\begin{equation*}
\begin{split}
E(t)
&=
\int_{\mathbb{T}^3}
\left(
\frac{\hbar^2}{2}|\nabla\sqrt{\rho}(t,x)|^2
+
\frac12 |\Lambda(t,x)|^2
+
\rho(t,x)\log \rho(t,x)
\right)\,dx
\\&=\int_{\mathbb{T}^3}
\left(
\frac{\hbar^2}{2}|\nabla\sqrt{\rho_0}|^2
+
\frac12 |\Lambda_0|^2
+
\rho_0\log \rho_0
\right)\,dx
=E(0)
\end{split}
\end{equation*}
for any time $t>0$.

This completes the proof of the main theorem.

  \section*{Acknowledgments}

The author  is partially supported
by the NSF grant: DMS-2510425, and by the Simons Foundation: MPS-TSM-00007824.

\end{document}